\newcommand{\eg}{{\it e.g.}}
\newcommand{\ie}{{\it i.e.}}
\theoremstyle{plain}
\newtheorem{theorem}{Theorem}[section]
\newtheorem{lemma}[theorem]{Lemma}
\theoremstyle{definition}
\theoremstyle{remark}
\newtheorem{remark}[theorem]{Remark}
\newtheorem{example}[theorem]{Example}
\renewcommand{\tilde}{\widetilde}
\newcommand{\sm}{\setminus}
\renewcommand{\d}{ \mathrm{d}}
\newcommand{\Id}{\mathrm{Id}}
\newcommand{\Dom}{\mathrm{Dom}}
\numberwithin{equation}{section}
\newcommand{\N}{\mathbb{N}}
\newcommand{\Z}{\mathbb{Z}}
\newcommand{\R}{\mathbb{R}}
\newcommand{\de}{\partial} 
\newcommand{\A}{\mathscr{A}}
\newcommand{\x}{{\times}}
\renewcommand{\hat}{\widehat}
\newcommand{\myequation}{\begin{equation}}
  \newcommand{\myendequation}{\end{equation}}
  \newenvironment{acknowledgements}{%
  % Rename Abstract to Acknowledgements
  \begin{abstract}
}{%
  \end{abstract}
}
  \title[Frequency-dependent damping in the linear wave equation]{Frequency-dependent damping in the linear wave equation}
  \author[F. Maddalena]{F. Maddalena}
  \address[Francesco Maddalena]{\newline
   Dipartimento di Meccanica, Matematica e Management, Politecnico di Bari,
   Via E.~Orabona 4, I--70125 Bari, Italy.}
  \email[]{francesco.maddalena@poliba.it}
  \author[G. Orlando]{G. Orlando}
  \address[Gianluca Orlando]{\newline
   Dipartimento di Meccanica, Matematica e Management, Politecnico di Bari,
   Via E.~Orabona 4, I--70125 Bari, Italy.}
  \email[]{gianluca.orlando@poliba.it}
\subjclass[2020]{
35L05, % Wave equation
35L20, % Initial-boundary value problems for second-order hyperbolic equations
35B40 % Asymptotic behavior of solutions
}
\begin{document}

\begin{abstract}  
  We propose a model for frequency-dependent damping in the linear wave equation. After proving well-posedness of the problem, we study qualitative properties of the energy. In the one-dimensional case, we provide an explicit analysis for special choices of the damping operator. Finally, we show, in special cases, that solutions split into a dissipative and a conservative part.
\end{abstract}

\maketitle

\setcounter{tocdepth}{1}
\tableofcontents

\section{Introduction}

Sliding friction phenomena are nowadays recognized as the simultaneous occurrence of different physical mechanisms which escape the attempt to reduce them to a simple relation,  like a friction law. In fact, entire areas of scientific research, such as Tribology, are specialized in the study of these problems~\cite{Bro16, NosBhu08,NosMor14}.

From a mathematical (PDEs) perspective a reasonable  goal relies in detecting one or more key features which can enter into a rational model whose analysis reveals significant and new aspects of the phenomenon under examination. It was observed by various authors that in sliding friction the occurrence of local instabilities emerging at different scales play a decisive role in this physical manifestation~\cite{NosBhu08}.
Following this suggestion, we address here the simplest problem related to a linear wave equation in which a damping effect can  occur  only at certain spatial scales. This toy model is thought to investigate the effects of multiscale character in the wave propagation and the corresponding long-time behavior.

The reference model we have in mind is a variant of the linearly damped wave equation: 
\[ \label{eqintro:damped wave}
\begin{cases}
  \de_{tt} u(t,x) - \Delta u(t,x) + \de_t u(t,x) = 0 \, , & (t,x) \in (0,+ \infty) \times \Omega \, , \\
  u(t,x)= 0 \, , & (t,x) \in (0,+ \infty) \times \de \Omega \, , \\
  u(0,x) = u_0(x) \, , \quad \de_t u(0,x) = v_0(x) \, , & x \in \Omega \, .
\end{cases}
\]
It is well-known that the solution to this equation converges to zero (in $H^1$ norm) as $t \to + \infty$ exponentially fast~\cite{Ika97, ZhoSunLi18}.
The damping term $\de_t u$ in the equation is responsible for this decay. 
We are interested in studying a different scenario, in which the damping term does not act on the whole function $u$, but on suitable parts of it. 
In view of the above physical considerations, we could investigate the case in which damping regards a suitable frequency band of the solution. 
This is implemented by applying a linear operator $P$ to the damping term in the equation, which then reads 
\[ \label{eqintro:PDE}
\begin{cases}
  \de_{tt} u(t,x) - \Delta u(t,x) + P[\de_t u(t,\cdot)](x) = 0 \, , & (t,x) \in (0,+ \infty) \times \Omega \, , \\
  u(t,x)= 0 \, , & (t,x) \in (0,+ \infty) \times \de \Omega \, , \\
  u(0,x) = u_0(x) \, , \quad \de_t u(0,x) = v_0(x) \, , & x \in \Omega \, .
\end{cases}
\]
We illustrate the simplest model we have in mind, leaving more general assumptions on $P$ to the content of this work.
With the aim of capturing different behaviors at different spatial scales, one can consider the special case $\Omega = \R^d$ and the operator $P$ given by 
\[ 
\hat{P[v]}(t,\xi) = \chi_{\{|\xi| > 1\}}(\xi) \hat{v}(t,\xi) \, , \quad \xi \in \R^d \, ,
\]
where $\hat{v}(t,\xi)$ denotes the Fourier transform with respect to the spatial variable $x$ and $\chi_{\{|\xi| > 1\}}(\xi)$ is the characteristic function of the complement of the unit ball.
In this special case, due to the linearity of the equation, the different behavior of the solution at different scales becomes evident by inspecting it in the frequency variable. 
Indeed, in such a case, the problem is split into
\[ \label{eqintro:splitting}
\begin{cases}
\de_{tt} \hat u(t,\xi) + |\xi|^2 \hat u(t,\xi) + \de_t \hat u(t,\xi) = 0 \, , & t \in (0,+\infty) \, , \ |\xi| > 1 \, , \quad \text{(damped regime)}\\
\de_{tt} \hat u(t,\xi) + |\xi|^2 \hat u(t,\xi) = 0 \, , & t \in (0,+\infty) \, , \ |\xi| \leq 1 \, , \quad \text{(undamped regime)} \\
+ \text{ initial conditions.}& 
\end{cases}
\]
These different regimes are reflected in the long-time behavior of the solution. 
According to~\eqref{eqintro:splitting}, one expects that for high frequencies (\ie, small spatial scales) the solution behaves like the one to~\eqref{eqintro:damped wave} and converges exponentially fast to zero (dissipative regime). 
In contrast, for low frequencies (\ie, large spatial scales) the solution behaves like the one to the undamped wave equation and exhibits no decay (conservative regime).
 
We describe the results obtained in this paper concerning the problem~\eqref{eqintro:PDE}.
In Section~2, under suitable assumption on the operator $P$, we prove well-posedness of the problem~\eqref{eqintro:PDE} and we study qualitative decay properties of the energy. 
Section~3 is devoted to an explicit analysis for special choices of the operator $P$ in the one-dimensional case.
Finally, in Section~4, we show that, in special cases, solutions to~\eqref{eqintro:PDE} split into a dissipative and a conservative part.

Let us emphasize that the linear framework addressed in the present work constitutes the simplest setting in which we aim to investigate the role of the spatial scales in dissipative phenomena. 
We are aware that a correct scenario to grasp the very nature of this physics should include nonlinear terms. 
More complex behaviors may occur when damping is coupled with adhesion~\cite{BurKel78, CocDevMad21,CocFloLigMad17,CooBar73,LazMolRivSol22,LazMolSol22,MadPer08,DMLazNar16,CocDNMadOrlZua24}.
We plan to address these issues in future works.

\section{Description of the model and well-posedness} \label{sec:well-posedness}

In this section we introduce the model, we prove well-posedness of the problem~\eqref{eqintro:PDE}, and we study some qualitative properties of the energy.

\subsection{The model} Let $d \geq 1$ and assume that 
\begin{enumerate}[label=($\Omega$\arabic*)]
  \item \label{eq:Omega} $\Omega \subset \R^d$ is a bounded, connected, open set with boundary of class $C^2$.\footnote{These assumptions are enough to apply Poincar\'e's inequality in $H^1_0(\Omega)$ and the elliptic regularity theory up to the boundary.}
\end{enumerate}

We study the initial boundary value problem 
\[ \label{eq:PDE}
\begin{cases}
  \de_{tt} u(t,x) - \Delta u(t,x) + P[\de_t u(t,\cdot)](x) = 0 \, , & (t,x) \in (0,+ \infty) \times \Omega \, , \\
  u(t,x)= 0 \, , & (t,x) \in (0,+ \infty) \times \de \Omega \, , \\
  u(0,x) = u_0(x) \, , \quad \de_t u(0,x) = v_0(x) \, , & x \in \Omega \, ,
\end{cases}
\]
where $P \colon L^2(\Omega) \to L^2(\Omega)$ satisfies the following properties:
\begin{enumerate}[label=($P$\arabic*)]
  \item \label{eq:P1} $P$ is a bounded linear operator;
  \item \label{eq:P2} $P$ is monotone, \ie, $\langle P[u], u \rangle_{L^2(\Omega)} \geq 0$ for all $u \in L^2(\Omega)$.
\end{enumerate}

\subsection{The operator $P$}
Some comments on the properties~\ref{eq:P1}--\ref{eq:P2} of the operator $P$ are in order.

\begin{remark} \label{rem:damped_wave}
If $P = \Id_{L^2(\Omega)}$, we recover the classical wave equation with damping, \ie, 
equation~\eqref{eq:PDE} reduces to
\[ \label{eq:damped_wave}
\begin{cases}
  \de_{tt} u(t,x) - \Delta u(t,x) + \de_t u(t,x) = 0 \, , & (t,x) \in (0,+ \infty) \times \Omega \, , \\
  u(t,x)= 0 \, , & (t,x) \in (0,+ \infty) \times \de \Omega \, , \\
  u(0,x) = u_0(x) \, , \quad \de_t u(0,x) = v_0(x) \, , & x \in \Omega \, .
\end{cases}
\]
\end{remark}

To explain in which sense the term $P[\de_t u(t,\cdot)]$ may model a frequency-dependent damping, we point out the following example.

\begin{example} \label{ex:projection}
  If $P \colon L^2(\Omega) \to L^2(\Omega)$ is an orthogonal projection, then the properties~\ref{eq:P1}--\ref{eq:P2} are satisfied. 
  Indeed, orthogonal projections in an Hilbert space are characterized as linear self-adjoint idempotent operators. 
  Continuity follows from the fact that 
  \[
  \|P[v]\|_{L^2(\Omega)}^2 = \langle P[v], P[v] \rangle_{L^2(\Omega)} = \langle P^2[v], P[v] \rangle_{L^2(\Omega)} = \langle P[v], v \rangle_{L^2(\Omega)} \leq \|P[v]\|_{L^2(\Omega)} \|v\|_{L^2(\Omega)}  \, .
  \]
  From the inequality above, we also obtain that $P$ is monotone, since
  \[
  \langle P[v], v \rangle_{L^2(\Omega)} = \|P[v]\|_{L^2(\Omega)}^2 \geq 0 \, .
  \]
If an orthogonal projection is interpreted as a filter that selects some of the frequencies in the wave $u(t,\cdot)$, then the term $P[\de_t u(t,\cdot)]$ can be seen as a damping term acting on those selected frequencies.
For more details on this interpretation, see Section~\ref{sec:1d}.
\end{example}

\begin{remark} \label{rem:P_commutes_with_det}
  Since $P$ is acting only on the spatial variable, it commutes with the differentiation with respect to time.
  We explain this fact in a more rigorous way.
  Given $\varphi \in C^\infty_c(\R \times \Omega)$, we have that $t \mapsto P[\varphi]$ belongs to $C^\infty(\R;L^2(\Omega))$ and 
  \[ \label{eq:commutation_with_test}
    \de_t P[\varphi(t,\cdot)](x) = P[\de_t \varphi(t,\cdot)](x) \, , \quad \text{for all } t \in \R \, , \ x \in \Omega \, .
  \]
  This follows from the fact that, by linearity of $P$, for all $t \in \R$, $h \neq 0$, $x \in \Omega$ we have that
  \[
  \frac{P[\varphi(t+h,\cdot)](x) - P[\varphi(t,\cdot)](x)}{h} = P\Big[ \frac{\varphi(t+h,\cdot) - \varphi(t,\cdot)}{h} \Big](x) \, , 
  \]
  Exploiting boundedness of $P$ and passing to the limit as $h \to 0$, we deduce that $t \in \R \mapsto P[\varphi(t,\cdot)] \in L^2(\Omega)$ is differentiable and we obtain the claimed identity~\eqref{eq:commutation_with_test}.
  We iterate the argument for higher order derivatives to conclude that $t \mapsto P[\varphi(t,\cdot)]$ belongs to $C^\infty(\R;L^2(\Omega))$.
\end{remark}

\subsection{Well-posedness}

We recast~\eqref{eq:PDE} as an ODE in a Hilbert space.
By writing~\eqref{eq:PDE} as the first-order system
\[
\begin{cases}
  \de_t u(t,x) - v(t,x) = 0 \, , & (t,x) \in (0,+ \infty) \times \Omega \, , \\
  \de_t v(t,x) - \Delta u(t,x) + P[v(t,\cdot)](x) = 0 \, , & (t,x) \in (0,+ \infty) \times \Omega \, , \\
  u(t,x)= 0 \, , & (t,x) \in (0,+ \infty) \times \de \Omega \, , \\
  u(0,x) = u_0(x) \, , \quad v(0,x) = v_0(x) \, , & x \in \Omega \, ,
\end{cases}
\]
then~\eqref{eq:PDE} can be interpreted as the Cauchy problem for curves $U \colon [0,+\infty) \to H$
\[ \label{eq:ACP}
\begin{cases}
  \displaystyle \frac{\d U}{\d t}(t) + \A U(t) = 0\, , & t \in (0,+ \infty) \, , \\
  U(0) = U_0 \, ,
\end{cases}
\]
where 
\begin{itemize}
  \item $U(t) = (u(t), v(t))$ and $U_0 = (u_0, v_0)$;
  \item $H = H^1_0(\Omega) \times L^2(\Omega)$ is the Hilbert space endowed with the scalar product (we are exploiting Poincar\'e 's inequality)
  \[
  \langle (u,v), (w,z) \rangle_H = \langle \nabla u, \nabla w \rangle_{L^2(\Omega)} + \langle v, z \rangle_{L^2(\Omega)} \, ;
  \]
  \item the linear operator $\A \colon \Dom(\A) \to H$ is defined by
  \[
  \A(u,v) = (-v, - \Delta u + P[v]) \, , \quad (u,v) \in \Dom(\A) \, ;
  \] 
  \item the domain $\Dom(\A)$ is given by
  \[
  \Dom(\A) = \{ (u,v) \in H \, : \, u \in H^2(\Omega) \cap H^1_0(\Omega) \, , \, v \in H^1_0(\Omega) \} \, .
  \]
\end{itemize}

We start with the key result that guarantees the well-posedness of the problem, relying on the theory of maximal monotone operators~\cite{Har87}.

\begin{lemma}
  Assume~\ref{eq:Omega} and~\ref{eq:P1}--\ref{eq:P2}. 
  Then the operator $\A \colon \Dom(\A) \to H$ is maximal monotone.
\end{lemma}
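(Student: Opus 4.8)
The plan is to verify the two defining properties of a maximal monotone operator: that $\A$ is monotone, meaning $\langle \A U, U \rangle_H \geq 0$ for every $U \in \Dom(\A)$, and that it is maximal, for which it suffices to prove that $\Id + \A$ is surjective onto $H$. For the monotonicity, I would compute, for $U = (u,v) \in \Dom(\A)$,
\[
\langle \A(u,v), (u,v) \rangle_H = -\langle \nabla v, \nabla u \rangle_{L^2(\Omega)} + \langle -\Delta u + P[v], v \rangle_{L^2(\Omega)} \, .
\]
Since $u \in H^2(\Omega)$ and $v \in H^1_0(\Omega)$, Green's formula yields $\langle -\Delta u, v \rangle_{L^2(\Omega)} = \langle \nabla u, \nabla v \rangle_{L^2(\Omega)}$ with no boundary contribution, so the terms $-\langle \nabla v, \nabla u \rangle_{L^2(\Omega)}$ and $\langle \nabla u, \nabla v \rangle_{L^2(\Omega)}$ cancel. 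What remains is $\langle \A U, U \rangle_H = \langle P[v], v \rangle_{L^2(\Omega)}$, which is nonnegative precisely by the monotonicity assumption~\ref{eq:P2}.

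For maximality, given an arbitrary $(f,g) \in H$, I would solve $(\Id + \A)(u,v) = (f,g)$, which is the system $u - v = f$ and $v - \Delta u + P[v] = g$. Eliminating $v = u - f$ and using the linearity of $P$, the problem reduces to the single elliptic equation
\[
-\Delta u + u + P[u] = f + g + P[f] =: h \, ,
\]
to be solved for $u \in H^2(\Omega) \cap H^1_0(\Omega)$, where $h \in L^2(\Omega)$ by~\ref{eq:P1}. Passing to the weak formulation, I would apply the Lax--Milgram theorem to the bilinear form
\[
B(u,w) = \langle \nabla u, \nabla w \rangle_{L^2(\Omega)} + \langle u, w \rangle_{L^2(\Omega)} + \langle P[u], w \rangle_{L^2(\Omega)}
\]
on $H^1_0(\Omega)$. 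Boundedness of $B$ follows from the Cauchy--Schwarz inequality together with the boundedness of $P$ in~\ref{eq:P1}, while coercivity follows from
\[
B(u,u) = \|\nabla u\|_{L^2(\Omega)}^2 + \|u\|_{L^2(\Omega)}^2 + \langle P[u], u \rangle_{L^2(\Omega)} \geq \|u\|_{H^1(\Omega)}^2 \, ,
\]
where the nonnegativity of the last term again comes from~\ref{eq:P2}. This produces a unique weak solution $u \in H^1_0(\Omega)$. Since then $-\Delta u = h - u - P[u] \in L^2(\Omega)$, elliptic regularity up to the boundary---for which the $C^2$ regularity of $\de\Omega$ in~\ref{eq:Omega} is used---upgrades $u$ to $H^2(\Omega)$. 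Setting $v := u - f \in H^1_0(\Omega)$ then gives $(u,v) \in \Dom(\A)$ with $(\Id + \A)(u,v) = (f,g)$, so $\Id + \A$ is surjective.

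I expect the maximality step to be the main obstacle, and within it two points deserve attention. First, $B$ is not symmetric when $P$ fails to be self-adjoint; this causes no difficulty, however, since Lax--Milgram requires only boundedness and coercivity, and coercivity only involves the nonnegative quadratic form $\langle P[u], u \rangle_{L^2(\Omega)}$ rather than the full operator $P$. Second, the passage from the weak solution in $H^1_0(\Omega)$ to a genuine $H^2$ solution relies essentially on the elliptic regularity theory enabled by the $C^2$ boundary assumption~\ref{eq:Omega}, as anticipated in the accompanying footnote.
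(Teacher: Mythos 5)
Your proposal is correct and follows essentially the same route as the paper: monotonicity via integration by parts reducing to $\langle P[v],v\rangle_{L^2(\Omega)}\ge 0$, and maximality by eliminating $v=u-f$, solving $-\Delta u+u+P[u]=h$ with Lax--Milgram (boundedness from~\ref{eq:P1} and Poincar\'e, coercivity from~\ref{eq:P2}), then upgrading to $H^2(\Omega)$ by elliptic regularity. Your remark that the bilinear form need not be symmetric is a fair point that the paper leaves implicit.
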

\begin{proof}
  We have to show the following properties:
  \begin{enumerate}[label={\itshape (\roman*)}]
    \item \label{item:monotone} $\A$ is monotone, \ie, for all $(u,v), (w,z) \in \Dom(\A)$ we have
    \[
    \langle \A(u,v) - \A(w,z), (u-w,v-z) \rangle_H \geq 0 \, ;  
    \]
    \item \label{item:maximal} $\A$ is maximal monotone, which is equivalent to say that $\A + \lambda \Id_H$ is surjective for some (and hence all) $\lambda > 0$.
  \end{enumerate}

  Let us prove~\ref{item:monotone}. 
  By linearity of $\A$, it is enough to show that $\A$ satisfies
  \[
  \langle \A(u,v), (u,v) \rangle_H \geq 0 \, , \quad (u,v) \in \Dom(\A) \, .
  \]
  Let $(u,v) \in \Dom(\A)$. 
  Integrating by parts and by~\ref{eq:P2}, we have that
  \[
  \begin{split}
    \langle \A(u,v), (u,v) \rangle_H & = \langle (-v, - \Delta u + P[v] + u), (u,v) \rangle_H \\
    & = - \langle \nabla v, \nabla u \rangle_{L^2(\Omega)} + \langle -\Delta u , v \rangle_{L^2(\Omega)} + \langle P[v], v \rangle_{L^2(\Omega)} \\
    & = - \langle \nabla v, \nabla u \rangle_{L^2(\Omega)} + \langle \nabla u, \nabla v \rangle_{L^2(\Omega)} + \langle P[v], v \rangle_{L^2(\Omega)}  = \langle P[v], v \rangle_{L^2(\Omega)} \geq 0 \, .
  \end{split}
  \]

  Let us prove~\ref{item:maximal}.
  We show that $\A + \Id_H$ is surjective.
  Let $(f,g) \in H$.
  We show that it is possible to find $(u,v) \in \Dom(\A)$ such that
  \[
  \begin{cases}
    -v + u = f \, , \\
    - \Delta u + P[v] + v = g \, .
  \end{cases}
  \]
  Indeed, the first equation gives $v = -f + u$.
  We substitute in the second equation to obtain that 
  \[
  - \Delta u + P[-f + u] + u - f = g \implies - \Delta u + P[u] + u  = f + P[f] + g \, .
  \] 
  The problem then reduces to finding $u \in H^2(\Omega)$ such that
  \[ \label{eq:surjectivity_problem}
  \begin{cases}
    - \Delta u + P[u] + u  = h \, , & \text{in } \Omega \, , \\
    u = 0 \, , & \text{on } \de \Omega \, ,
  \end{cases}
  \]
  where $h = f + P[f] + g \in L^2(\Omega)$.
  This problem has a unique solution $u \in H^1_0(\Omega)$ by the Lax-Milgram theorem.
  Indeed, the bilinear form
  \[
  b(u,u') = \langle \nabla u, \nabla u' \rangle_{L^2(\Omega)} + \langle u, u' \rangle_{L^2(\Omega)} + \langle P[u], u' \rangle_{L^2(\Omega)} \, , \quad u,u' \in H^1_0(\Omega) \, ,
  \]
  satisfies
  \[
  |b(u,u')| \leq C \|u\|_{H^1_0(\Omega)} \|u'\|_{H^1_0(\Omega)} \, , \quad u,u' \in H^1_0(\Omega) \, ,
  \]
  by the continuity of $P$ and Poincar\'e's inequality. 
  Moreover, it is coercive, since
  \[
  b(u,u) = \|\nabla u\|_{L^2(\Omega)}^2 + \|u\|_{L^2(\Omega)}^2 + \langle P[u], u \rangle_{L^2(\Omega)} \geq \|\nabla u\|_{L^2(\Omega)}^2 + \|u\|_{L^2(\Omega)}^2 \geq c \|u\|_{H^1_0(\Omega)}^2 \, ,
  \]
  by~\ref{eq:P2} and Poincar\'e's inequality.

  Then we read~\eqref{eq:surjectivity_problem} as
  \[
    \begin{cases}
      - \Delta u = h - P[u] - u\, , & \text{in } \Omega \, , \\
      u = 0 \, , & \text{on } \de \Omega \, ,
    \end{cases}
  \]
  with the right-hand side $h - P[u] - u \in L^2(\Omega)$.
  By the elliptic regularity theory, we conclude that $u \in H^2(\Omega) \cap H^1_0(\Omega)$.
  This concludes the proof.
\end{proof}

Thanks to the maximal monotonicity of $\A$, we can apply the theory of maximal monotone operators to guarantee the well-posedness of the Cauchy problem~\eqref{eq:ACP}.
We have the following classical result.

\begin{theorem} \label{thm:well-posedness}
  Assume~\ref{eq:Omega} and~\ref{eq:P1}--\ref{eq:P2}. 
  Then the operator $\A$ generates a continuous semigroup $\{S(t)\}_{t \geq 0}$ on $H$, \ie, for all $t \geq 0$ the operator $S(t) \colon H \to H$ is linear with $\|S(t)\| \leq 1$, $S(0) = \Id_H$, $S(t+s) = S(t)S(s)$ for all $t,s \geq 0$, and $\lim_{t \to 0} \|S(t)U_0 - U_0\|_H = 0$ for all $U_0 \in H$.
  Moreover, if $U_0 \in \Dom(\A)$, then $U(t) = S(t)U_0$ belongs to $C^1([0,+\infty);H) \cap C([0,+\infty); \Dom(\A))$ and is the unique solution to~\eqref{eq:ACP}. 
\end{theorem}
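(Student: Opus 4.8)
The plan is to deduce the statement directly from the generation theorem for strongly continuous contraction semigroups associated with maximal monotone operators (the Hille--Yosida theorem in its maximal monotone formulation), as developed in~\cite{Har87}. The preceding lemma has already verified the only nontrivial hypothesis, namely that $\A$ is maximal monotone on $H$; all the asserted properties of $\{S(t)\}_{t \geq 0}$---linearity, the contraction bound $\|S(t)\| \leq 1$, the normalization $S(0) = \Id_H$, the semigroup law $S(t+s) = S(t)S(s)$, strong continuity at $t = 0$, and the existence, uniqueness, and regularity of the solution to~\eqref{eq:ACP} for data in $\Dom(\A)$---are exactly the conclusions of that theorem. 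Thus, once maximal monotonicity is in hand, the proof reduces to a citation.

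Should one prefer to reconstruct the argument, I would build the semigroup through the Yosida approximation. For each $\lambda > 0$ I would first introduce the resolvent $J_\lambda = (\Id_H + \lambda \A)^{-1}$, which is well defined, single valued, and a contraction precisely because $\A$ is maximal (surjectivity of $\Id_H + \lambda \A$) and monotone; then the Yosida regularization $\A_\lambda = \frac{1}{\lambda}(\Id_H - J_\lambda)$. The operator $\A_\lambda$ is everywhere defined, Lipschitz, and monotone, so $-\A_\lambda$ generates the uniformly continuous contraction semigroup $S_\lambda(t) = \exp(-t\A_\lambda)$ defined by the convergent exponential series.

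The core of the argument, and the step I expect to be the main obstacle, is to show that $S_\lambda(t)U_0$ converges as $\lambda \to 0$, uniformly for $t$ in bounded intervals, and that the limit $S(t)U_0$ inherits the contraction and semigroup properties. For $U_0 \in \Dom(\A)$ I would differentiate $\tfrac{1}{2}\|S_\lambda(t)U_0 - S_\mu(t)U_0\|_H^2$ in time and exploit the monotonicity of the regularizations together with the standard bounds $\|\A_\lambda U_0\|_H \leq \|\A U_0\|_H$ and $\|J_\lambda U_0 - U_0\|_H \leq \lambda\|\A U_0\|_H$, obtaining a Cauchy estimate whose right-hand side vanishes as $\lambda, \mu \to 0$. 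Since $\Dom(\A)$ is dense in $H$ and each $S_\lambda(t)$ is a contraction, the convergence would then extend to arbitrary $U_0 \in H$, defining $S(t)$ on all of $H$.

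It remains to identify the limit as the solution of~\eqref{eq:ACP} and to settle uniqueness. For $U_0 \in \Dom(\A)$ I would pass to the limit in the identity $\frac{\d}{\d t}S_\lambda(t)U_0 = -\A_\lambda S_\lambda(t)U_0$ to obtain that $U(t) = S(t)U_0$ belongs to $C^1([0,+\infty);H) \cap C([0,+\infty);\Dom(\A))$ and solves~\eqref{eq:ACP}. Uniqueness is the easy part and follows from monotonicity alone: if $U_1$ and $U_2$ both solve~\eqref{eq:ACP}, then
\[
\frac{\d}{\d t}\|U_1(t) - U_2(t)\|_H^2 = -2\langle \A U_1(t) - \A U_2(t), U_1(t) - U_2(t)\rangle_H \leq 0 \, ,
\]
so $t \mapsto \|U_1(t) - U_2(t)\|_H$ is nonincreasing and, vanishing at $t = 0$, is identically zero.
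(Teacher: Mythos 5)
Your proof is correct and takes the same route as the paper: the paper's entire proof is a citation of the generation theorem for maximal monotone operators (Theorem I.2.2.1 and Remark I.2.2.3 in the cited reference), applied after the preceding lemma establishes maximal monotonicity of $\A$, which is exactly your first paragraph. Your optional sketch of the Yosida-approximation construction and the monotonicity-based uniqueness argument is a correct outline of the standard proof of that cited theorem, but it is not needed beyond the citation.
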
 
\begin{proof}
  The result follows from the theory of maximal monotone operators, see, \eg, \cite[Theorem I.2.2.1 \& Remark I.2.2.3]{Har87}.
\end{proof}

\subsection{Energy-dissipation balance} 
In this subsection we show that the energy of the solution to~\eqref{eq:PDE} is non-increasing in time and we provide an explicit expression for the energy dissipated by the damping term. 
Given $(u,v) \in H^1_0(\Omega) \times L^2(\Omega)$, we define the energy functional by
\[
E(u,v) = \frac{1}{2} \int_\Omega |v(x)|^2 \, \d x + \frac{1}{2} \int_\Omega |\nabla u(x)|^2 \, \d x \, .
\]

\begin{theorem}
  Assume~\ref{eq:Omega} and~\ref{eq:P1}--\ref{eq:P2}. 
  Let $U_0 = (u_0, v_0) \in H^1_0(\Omega) \times L^2(\Omega)$.
  Let $U(t) = (u(t), v(t))$ be the unique solution to~\eqref{eq:ACP} with initial datum $U_0$ provided by Theorem~\ref{thm:well-posedness}.
  Then the following energy-dissipation balance holds:
\[ \label{eq:energy_dissipation_balance}
E(u(t), v(t)) + \int_0^t \langle P[v(s)], v(s) \rangle_{L^2(\Omega)} \, \d s = E(u_0, v_0) \, , \quad t \in [0,+\infty) \, .
\]
In particular, the energy of the solution is non-increasing in time, \ie, 
\[
E(u(s), v(s)) \geq E(u(t), v(t)) \, , \quad 0 \leq s \leq t \, .
\] 
\end{theorem}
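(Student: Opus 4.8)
The plan is to establish \eqref{eq:energy_dissipation_balance} first for regular initial data $U_0 \in \Dom(\A)$, where the solution is classical, and then to recover the general case $U_0 \in H$ by approximation. The key observation is that the energy coincides with half the squared norm in $H$: indeed
\[
\|U(t)\|_H^2 = \|\nabla u(t)\|_{L^2(\Omega)}^2 + \|v(t)\|_{L^2(\Omega)}^2 = 2 E(u(t),v(t)) \, ,
\]
so that $E(u(t),v(t)) = \tfrac{1}{2}\|U(t)\|_H^2$.

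First I would take $U_0 \in \Dom(\A)$, so that by Theorem~\ref{thm:well-posedness} the solution satisfies $U \in C^1([0,+\infty);H)$ and $\frac{\d U}{\d t}(t) = -\A U(t)$ for every $t$. Since $U$ is a $C^1$ curve in the Hilbert space $H$, the scalar function $t \mapsto \|U(t)\|_H^2$ is continuously differentiable with $\frac{\d}{\d t}\|U(t)\|_H^2 = 2\big\langle \frac{\d U}{\d t}(t), U(t)\big\rangle_H$. Using the ODE together with the identity $\langle \A(u,v),(u,v)\rangle_H = \langle P[v],v\rangle_{L^2(\Omega)}$ already established above, I obtain
\[
\frac{\d}{\d t} E(u(t),v(t)) = \Big\langle \frac{\d U}{\d t}(t), U(t) \Big\rangle_H = -\langle \A U(t), U(t)\rangle_H = -\langle P[v(t)], v(t)\rangle_{L^2(\Omega)} \, .
\]
The right-hand side is continuous in $t$, so integrating over $[0,t]$ gives exactly \eqref{eq:energy_dissipation_balance} for regular data.

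To pass to general $U_0 \in H$ I would use that $\Dom(\A)$ is dense in $H$, which holds for any maximal monotone operator, together with the contraction bound $\|S(t)\| \leq 1$. Choosing $U_0^n \in \Dom(\A)$ with $U_0^n \to U_0$ in $H$ and setting $U^n(t) = S(t)U_0^n = (u^n(t),v^n(t))$, the contraction property yields $\sup_{t \geq 0}\|U^n(t) - U(t)\|_H \leq \|U_0^n - U_0\|_H \to 0$; in particular $v^n \to v$ uniformly in $L^2(\Omega)$ and $E(u^n(t),v^n(t)) \to E(u(t),v(t))$. The same estimate gives a uniform bound $\sup_{n,s}\|v^n(s)\|_{L^2(\Omega)} \leq M < \infty$, whence by boundedness of $P$ and bilinearity
\[
|\langle P[v^n(s)],v^n(s)\rangle_{L^2(\Omega)} - \langle P[v(s)],v(s)\rangle_{L^2(\Omega)}| \leq C \, \|v^n(s)-v(s)\|_{L^2(\Omega)} \, ,
\]
with $C = C(\|P\|, M)$. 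Hence the dissipation integrals converge as $n \to \infty$, and \eqref{eq:energy_dissipation_balance} passes to the limit.

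The main obstacle is precisely this last passage to the limit in the dissipation term, which is the only place where the qualitative structure of $P$ enters; once the uniform $L^2(\Omega)$-convergence of $v^n$ is secured through the contraction property of the semigroup, continuity of the bilinear pairing $(v,w)\mapsto \langle P[v],w\rangle_{L^2(\Omega)}$ closes the argument. Finally, the monotonicity of the energy is immediate from \eqref{eq:energy_dissipation_balance}, since the integrand $\langle P[v(s)],v(s)\rangle_{L^2(\Omega)}$ is nonnegative by~\ref{eq:P2}, so that $E(u(t),v(t)) \leq E(u(s),v(s))$ whenever $0 \leq s \leq t$.
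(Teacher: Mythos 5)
Your proof is correct and follows essentially the same route as the paper: establish \eqref{eq:energy_dissipation_balance} for $U_0 \in \Dom(\A)$ by differentiating the energy (you phrase this via $E = \tfrac12\|U\|_H^2$ and the identity $\langle \A U, U\rangle_H = \langle P[v],v\rangle_{L^2(\Omega)}$, the paper substitutes the PDE directly, but it is the same computation), then approximate general data by $\Dom(\A)$ data and pass to the limit. Your treatment of the limit in the dissipation integral, via the contraction property and the bilinear estimate, is in fact more explicit than the paper's, which only invokes continuity of the energy functional.
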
 
\begin{proof}
  Let us start by proving~\eqref{eq:energy_dissipation_balance} in the case $U_0 = (u_0, v_0) \in \Dom(\A)$.
  In this case, by Theorem~\ref{thm:well-posedness} the solution $U(t) = (u(t), v(t))$ belongs to $C^1([0,+\infty);H) \cap C([0,+\infty); \Dom(\A))$.
  It follows that $t \mapsto E(u(t), v(t))$ is differentiable. 
  Differentiating the energy functional, substituting the equation in~\eqref{eq:PDE}, and integrating by parts, we obtain
  \[
  \begin{split}
    \frac{\d}{\d t} E(u(t), v(t)) & = \frac{\d}{\d t} E(u(t), \de_t u (t)) = \langle \de_t u(t), \de_{tt} u(t) \rangle_{L^2(\Omega)} + \langle \nabla u(t), \nabla \de_t u(t) \rangle_{L^2(\Omega)} \\
    & = \langle \de_t u(t), \Delta u(t) - P[\de_t u(t)] \rangle_{L^2(\Omega)} - \langle \Delta u(t), \de_t u(t) \rangle_{L^2(\Omega)} \\
    & = - \langle P[\de_t u(t)], \de_t u(t) \rangle_{L^2(\Omega)} \, .
  \end{split}
  \]
  Integrating in time, we obtain~\eqref{eq:energy_dissipation_balance}.

  Assuming that $U_0 = (u_0, v_0) \in H^1_0(\Omega) \times L^2(\Omega)$, we approximate it by a sequence $U_0^n = (u_0^n, v_0^n) \in \Dom(\A)$ such that $U_0^n \to U_0$ in $H$ as $n \to + \infty$.
  By the continuity of the semigroup obtained in Theorem~\ref{thm:well-posedness}, we have that $U^n(t) = (u^n(t), v^n(t)) = S(t)U_0^n \to U(t) = (u(t),v(t)) = S(t)U_0$ in $H$ as $n \to +\infty$, for all $t \geq 0$.
  The energy-dissipation balance~\eqref{eq:energy_dissipation_balance} holds for $U_0^n$ and $U^n(t)$, \ie,
  \[
  E(u^n(t), v^n(t)) + \int_0^t \langle P[v^n(s)], v^n(s) \rangle_{L^2(\Omega)} \, \d s = E(u_0^n, v_0^n) \, , \quad t \in [0,+\infty) \, .
  \]
  By the continuity of the energy functional with respect to the norm of $H$, we pass to the limit as $n \to + \infty$ in the above equation to obtain~\eqref{eq:energy_dissipation_balance} for $U_0$ and $U(t)$.

  Finally, to show that the energy is non-increasing in time, we observe that 
  \[
  E(u(t), v(t)) + \int_s^t \langle P[v(\tau)], v(\tau) \rangle_{L^2(\Omega)} \, \d \tau = E(u(s), v(s)) \, , \quad 0 \leq s \leq t \, ,
  \]
  and, by the monotonicity of $P$, we have that $\langle P[v(\tau)], v(\tau) \rangle_{L^2(\Omega)} \geq 0$ for all $\tau \geq 0$.
  This concludes the proof.
\end{proof}

\begin{remark} \label{rem:exponential_decay_damped_wave}
  If $P = \Id_{L^2(\Omega)}$, the equation~\eqref{eq:PDE} reduces to the damped wave equation~\eqref{eq:damped_wave}, see Remark~\ref{rem:damped_wave}.
  A classical result states that the energy of the solution to the damped wave equation decays exponentially fast to zero.
  We recall the proof of this result, which relies on the perturbed energy functional
  \[
  E_\lambda(u,v) = \frac{1}{2} \int_\Omega |v(x)|^2 \, \d x + \frac{1}{2} \int_\Omega |\nabla u(x)|^2 \, \d x + \lambda \int_\Omega u(x)v(x) \, \d x \, ,
  \]
  where $\lambda > 0$ is a parameter to be chosen.
  Assuming that the initial datum $U_0 = (u_0, v_0)$ belongs to $\Dom(\A)$, we have that the solution $U(t) = (u(t), v(t))$ provided by Theorem~\ref{thm:well-posedness} belongs to $C^1([0,+\infty);H) \cap C([0,+\infty); \Dom(\A))$. 
  We can differentiate the perturbed energy functional and substitute the equation to obtain that 
  \[
  \begin{split}
    \frac{\d}{\d t} E_\lambda(u(t), v(t)) & =  -  \|v(t)\|_{L^2(\Omega)}^2 + \lambda \|v(t)\|_{L^2(\Omega)}^2 + \lambda \langle u(t), \Delta u(t) \rangle_{L^2(\Omega)} - \lambda \langle u(t), v(t) \rangle_{L^2(\Omega)} \\
    & = - (1 - \lambda) \|v(t)\|_{L^2(\Omega)}^2 - \lambda \| \nabla u(t) \|^2_{L^2(\Omega)} - \lambda \langle u(t), v(t) \rangle_{L^2(\Omega)} \, .
  \end{split}
  \]
  Choosing $0 < \lambda < \frac{1}{2}$, we obtain that
  \[
  \frac{\d}{\d t} E_{\lambda}(u(t), v(t)) \leq - C_\lambda E_{\lambda}(u(t), v(t)) \, , 
  \]
  for a suitable $C_\lambda > 0$, which implies that 
  \[
  E_{\lambda}(u(t), v(t)) \leq E_{\lambda}(u_0, v_0) e^{- \gamma t} \, ,
  \]
  with $\gamma > 0$ depending on $\lambda$.
  Then we estimate, by Poincar\'e's inequality, that
  \[
  \big| \langle u(t), v(t) \rangle_{L^2(\Omega)} \big| \leq \frac{1}{2} \|u(t)\|_{L^2(\Omega)}^2 + \frac{1}{2} \|v(t)\|_{L^2(\Omega)}^2 \leq C \|\nabla u(t)\|_{L^2(\Omega)} + \frac{1}{2} \|v(t)\|_{L^2(\Omega)}^2 \, ,
  \]
  where $C > 0$ depends on the Poincar\'e constant of $\Omega$.
  By choosing $\lambda$ suitably small, we obtain the estimate 
  \[
  \frac{1}{C} E_{\lambda}(u(t), v(t)) \leq E(u(t), v(t)) \leq C E_{\lambda}(u(t), v(t)) \, ,
  \]
  which implies that 
  \[
  E(u(t), v(t)) \leq C E(u_0, v_0) e^{- \gamma t} \, .
  \]
  The estimate above is extended to the case $U_0 = (u_0, v_0) \in H^1_0(\Omega) \times L^2(\Omega)$ by approximation.
  This energy decay implies that the solution to the damped wave equation converges exponentially fast to zero in the energy space.
\end{remark}

\section{Explicit solution in the one-dimensional case for filtered damping} \label{sec:1d}

In this section we provide an explicit solution to the problem~\eqref{eq:PDE} in the one-dimensional case, \ie, $\Omega = (0,L)$, assuming a specific structure for the operator $P$.
This will allow us to understand the asymptotic behavior of the solution in some specific cases.

\subsubsection*{Formulation of the problem in one dimension} We aim to solve the problem
\[ \label{eq:PDE 1d}
\begin{cases}
  \de_{tt}u  - \de_{xx}u + P[\de_t  u(t,\cdot)](x) = 0 \, , & (t,x) \in (0,+ \infty) \times (0,L) \, , \\
  u(t,0) = u(t,L) = 0 \, , & t \in [0,+ \infty) \, , \\
  u(0,x) = u_0(x) \, , \quad u_t(0,x) = v_0(x) & x \in (0,L) \, ,
\end{cases}
\]
where $u_0 \in H^1_0(0,L)$ and $v_0 \in L^2(0,L)$. 

\subsubsection*{Fourier series notation} To solve explicitly the problem, we will regard solutions as $L$-periodic functions and use Fourier series. We write
\[
u_0(x) = \sum_{k\in \Z} \hat{u}_k(0) e^{2 \pi i k x / L} \, , \quad v_0(x) = \sum_{k\in \Z} \hat{v}_k(0) e^{2 \pi i k x / L} \, , \quad u(t,x) = \sum_{k\in \Z} \hat{u}_k(t) e^{2 \pi i k x / L} \, ,
\]
where the Fourier coefficients are given by
\[
\hat{u}_k(t) = \frac{1}{L} \int_0^L u(t,x) e^{-2 \pi i k x / L} \, \d x \, , \quad k \in \Z \, ,
\]
and analogously for $u_0$ and $v_0$.
Since $u_0$ and $v_0$ are real-valued, we have that 
\[
\hat{u}_{-k}(0) = \overline{\hat{u}_k(0)} \, , \quad \hat{v}_{-k}(0) = \overline{\hat{v}_k(0)}  \, .
\] 

\subsubsection*{Assumption on the operator $P$} We analyze a specific form for the damping term $P[\de_t u(t,\cdot)]$, assuming that $P$ acts as a frequency filter.
More precisely, we assume that 
\begin{enumerate}[label=($P$\arabic*)]
  \setcounter{enumi}{2}
  \item \label{eq:P3} there exists a bounded sequence $(\hat \phi_k)_{k \in \Z}$ with $\hat \phi_k \geq 0$ for all $k \in \Z$ such that
\[ 
(\hat{P[v]})_k = \hat{\phi}_k \hat{v}_k \,. 
\]
\end{enumerate}

\begin{remark}
  If $P$ satisfies~\ref{eq:P3}, by Parseval's identity, we have that
  \[
  \|P[v]\|_{L^2(0,L)}^2 = \sum_{k \in \Z} |\hat \phi_k \hat v_k|^2 \leq \sup_{k \in \Z} |\hat \phi_k|^2 \|v\|_{L^2(0,L)}^2 \, ,
  \]
  whence~\ref{eq:P1} holds.
  Moreover, 
  \[
  \langle P[v], v \rangle_{L^2(0,L)} = \sum_{k \in \Z} \hat \phi_k \hat v_k^2 \geq 0 \, ,
  \]
  \ie, \ref{eq:P2} holds.
\end{remark}

\begin{example}  
  If $P$ is obtained via a convolution with an $L^1$ function, \ie, 
  \[
  P[v](x) = \phi * v(x) = \frac{1}{L}\int_0^L \phi(x-y) v(y) \, \d y \, ,
  \]
  with $\phi \in L^1(0,L)$ extended periodically, then~\ref{eq:P3} is satisfied.
  Indeed, we have that 
  \[
    (\hat{P[v]})_k = (\hat{ \phi * v})_k = \hat{\phi}_k \hat{v}_k \, .
  \]
\end{example}

\subsubsection*{Solving the PDE with Fourier series} Taking the Fourier series in the equation in~\eqref{eq:PDE 1d} we obtain that the Fourier coefficients $\hat{u}_k(t)$ satisfy the following system of ordinary differential equations 
\[ \label{eq:ODE_hat_u_k}
  \de_{tt} \hat{u}_k(t) + \big(\tfrac{2 \pi k}{L} \big)^2 \hat{u}_k(t) + \hat \phi_k \de_t \hat{u}_k(t) = 0   \, , \quad k \in \Z \, .
\]
This can be solved explicitly. 
The characteristic equation is
\[
\lambda^2 + \hat \phi_k \lambda + \big(\tfrac{2 \pi k}{L} \big)^2 = 0 \, ,
\]
and its solutions are, possibly counted with multiplicity,
\[ \label{eq:lambda_k_pm}
\lambda_k^\pm = \frac{- \hat \phi_k \pm \sqrt{\hat \phi_k^2 - \left( \frac{4 \pi k}{L} \right)^2}}{2} \,.
\]

{\itshape Case 1:} $\hat \phi_k^2 - \left( \frac{4 \pi k}{L} \right)^2 > 0$. The roots obtained in~\eqref{eq:lambda_k_pm} are real and distinct. 
The general solution of~\eqref{eq:ODE_hat_u_k} is
\[ \label{eq:general_solution_1d_delta_positive}
\hat{u}_k(t) =  A_k e^{t \lambda_k^+} + B_k e^{t \lambda_k^-}  \, ,
\]
where $A_k$ and $B_k$ are complex constants that depend on the initial data.
To find them, we impose 
\[
\begin{cases}
A_k + B_k = \hat u_k(0) \, , \\
A_k \lambda_k^+ + B_k \lambda_k^- = \hat v_k(0) \, .
\end{cases}
\]
Solving this system we obtain
\[
  A_k = - \frac{\hat u_k(0) \lambda_k^- - \hat v_k(0)}{\lambda_k^+ - \lambda_k^-} \, , \quad B_k = \frac{\hat u_k(0) \lambda_k^+ - \hat v_k(0)}{\lambda_k^+ - \lambda_k^-} \, .
\]
Substituting in~\eqref{eq:general_solution_1d_delta_positive} we obtain
\[ 
\begin{split}
  \hat u_k(t) & = - \frac{\hat u_k(0) \lambda_k^- - \hat v_k(0)}{\lambda_k^+ - \lambda_k^-} e^{t \lambda_k^+} + \frac{\hat u_k(0) \lambda_k^+ - \hat v_k(0)}{\lambda_k^+ - \lambda_k^-} e^{t \lambda_k^-} \\
  & = \hat u_k(0) \frac{- \lambda_k^- e^{t \lambda_k^+} + \lambda_k^+ e^{t \lambda_k^-} }{\lambda_k^+ - \lambda_k^-} + \hat v_k(0)\frac{e^{t \lambda_k^+} - e^{t \lambda_k^-}}{\lambda_k^+ - \lambda_k^-}  \\
  & = \hat u_k(0) \frac{ \big( \frac{1}{2} \hat \phi_k + \frac{1}{2} \sqrt{\hat \phi_k^2 - \big(\tfrac{4\pi k}{L}\big)^2} \big) e^{t \lambda_k^+} + \big( - \frac{1}{2} \hat \phi_k + \frac{1}{2} \sqrt{\hat \phi_k^2 - \big(\tfrac{4\pi k}{L}\big)^2} \big) e^{t \lambda_k^-} }{\sqrt{\hat \phi_k^2 - \big(\tfrac{4\pi k}{L}\big)^2}} \\
  & \quad + \hat v_k(0)\frac{e^{t \lambda_k^+} - e^{t \lambda_k^-}}{\sqrt{\hat \phi_k^2 - \big(\tfrac{4\pi k}{L}\big)^2}} \\
  & = \hat u_k(0) \frac{e^{t \lambda_k^+} + e^{t \lambda_k^-}}{2} + \Big(  \hat \phi_k  \hat u_k(0) + 2\hat v_k(0) \Big)\frac{1}{\sqrt{\hat \phi_k^2 - \big(\tfrac{4\pi k}{L}\big)^2}} \frac{e^{t \lambda_k^+} - e^{t \lambda_k^-}}{2} \, .
\end{split} 
\]
Writing the explicit expression of $\lambda_k^\pm$ we obtain
\[
\begin{split}
  \hat u_k(t) = e^{- t  \frac{\hat \phi_k}{2}} \Big[ & \cosh \Big( \tfrac{t}{2}\sqrt{\hat \phi_k^2 - \big(\tfrac{4\pi k}{L}\big)^2}  \Big) \hat u_k(0)  \\
  & + \frac{1}{\frac{1}{2}\sqrt{\hat \phi_k^2 - \big(\tfrac{4\pi k}{L}\big)^2}}   \sinh \Big( \tfrac{t}{2}\sqrt{\hat \phi_k^2 - \big(\tfrac{4\pi k}{L}\big)^2}  \Big) \Big(\tfrac{\hat \phi_k}{2}  \hat u_k(0) + \hat v_k(0)\Big) \Big]\, .
\end{split}
\]

{\itshape Case 2:} $\hat \phi_k^2 - \left( \frac{4 \pi k}{L} \right)^2 < 0$. The roots obtained in~\eqref{eq:lambda_k_pm} are complex.
The algebra to obtain the solution is the same as in the previous case. We obtain
\[
  \begin{split}
    \hat u_k(t) = e^{- t  \frac{\hat \phi_k}{2}} \Big[ & \cos \Big( \tfrac{t}{2}\sqrt{\big(\tfrac{4\pi k}{L}\big)^2 - \hat \phi_k^2 }  \Big) \hat u_k(0)  \\
    & + \frac{1}{\frac{1}{2}\sqrt{\big(\tfrac{4\pi k}{L}\big)^2 - \hat \phi_k^2 }}   \sin \Big( \tfrac{t}{2}\sqrt{\big(\tfrac{4\pi k}{L}\big)^2 - \hat \phi_k^2 }  \Big) \Big(\tfrac{\hat \phi_k}{2}  \hat u_k(0) + \hat v_k(0)\Big) \Big]\, .
  \end{split}
  \]
  
{\itshape Case 3:} $\hat \phi_k^2 - \left( \frac{4 \pi k}{L} \right)^2 = 0$. The roots obtained in~\eqref{eq:lambda_k_pm} are real and coincide.
This means that the general solution of~\eqref{eq:ODE_hat_u_k} is
\[ \label{eq:general_solution_1d_delta_zero}
\hat{u}_k(t) =  A_k e^{-t \frac{\hat \phi_k}{2}} + B_k t e^{-t \frac{\hat \phi_k}{2}}  \, .
\]
To find the constants $A_k$ and $B_k$ we impose
\[
\begin{cases}
A_k = \hat u_k(0) \, , \\
A_k \big( - \frac{\hat \phi_k}{2} \big) + B_k  = \hat v_k(0) \, .
\end{cases} 
\]
Solving this system we obtain
\[
A_k = \hat u_k(0) \, , \quad B_k = \tfrac{\hat \phi_k}{2} \hat u_k(0) + \hat v_k(0)  \, .
\]
Substituting in~\eqref{eq:general_solution_1d_delta_zero} we obtain
\[
\hat u_k(t) = e^{-t \frac{\hat \phi_k}{2}} \Big[ \hat u_k(0)  + \Big(\tfrac{\hat \phi_k}{2} \hat u_k(0) + \hat v_k(0) \Big) t   \Big]  \, .
\]

\subsubsection*{Result in one dimension} In the previous subsection we have obtained the explicit solution to the problem~\eqref{eq:PDE 1d} in the one-dimensional case.
The result is summarized in the following theorem.

\begin{theorem} \label{thm:explicit_solution_1d}
  Assume that $\Omega = (0,L)$ and that the operator $P$ satisfies~\ref{eq:P3}.
  Let $u_0 \in H^1_0(0,L)$ and $v_0 \in L^2(0,L)$.
  Let $u(t,x)$ be the unique solution to~\eqref{eq:PDE 1d} with initial data $u_0$ and $v_0$.
  Then the solution $u(t,x)$ is given by
  \[
    u(t,x) = \sum_{k \in \Z} \hat u_k(t) e^{2 \pi i k x / L} \, ,
  \]
  where the Fourier coefficients $\hat u_k(t)$ are given by the following expressions:
  \begin{itemize}
    \item if $\hat \phi_k^2 - \left( \frac{4 \pi k}{L} \right)^2 > 0$, then 
  \[ \label{eq:explicit_solution_1d_delta_positive}
  \begin{split}
    \hat u_k(t) = e^{- t  \frac{\hat \phi_k}{2}} \Big[ & \cosh \Big( \tfrac{t}{2}\sqrt{\hat \phi_k^2 - \big(\tfrac{4\pi k}{L}\big)^2}  \Big) \hat u_k(0)  \\
  & + \frac{1}{\frac{1}{2}\sqrt{\hat \phi_k^2 - \big(\tfrac{4\pi k}{L}\big)^2 }}   \sinh \Big( \tfrac{t}{2}\sqrt{\hat \phi_k^2 - \big(\tfrac{4\pi k}{L}\big)^2}  \Big) \Big(\tfrac{\hat \phi_k}{2}  \hat u_k(0) + \hat v_k(0)\Big) \Big]\, ;
  \end{split}
  \]
  \item if $\hat \phi_k^2 - \left( \frac{4 \pi k}{L} \right)^2 < 0$, then
  \[ \label{eq:explicit_solution_1d_delta_negative}
  \begin{split}
  \hat u_k(t) = e^{- t  \frac{\hat \phi_k}{2}} \Big[ & \cos \Big( \tfrac{t}{2}\sqrt{\big(\tfrac{4\pi k}{L}\big)^2 - \hat \phi_k^2 }  \Big) \hat u_k(0)  \\
    & + \frac{1}{\frac{1}{2}\sqrt{\big(\tfrac{4\pi k}{L}\big)^2- \hat \phi_k^2}}   \sin \Big( \tfrac{t}{2}\sqrt{\big(\tfrac{4\pi k}{L}\big)^2 - \hat \phi_k^2 }  \Big) \Big(\tfrac{\hat \phi_k}{2}  \hat u_k(0) + \hat v_k(0)\Big) \Big]\, ;
  \end{split}
  \] 
  \item if $\hat \phi_k^2 - \left( \frac{4 \pi k}{L} \right)^2 = 0$, then
  \[ \label{eq:explicit_solution_1d_delta_zero}
  \hat u_k(t) = e^{-t \frac{\hat \phi_k}{2}} \Big[ \hat u_k(0)  + \Big(\tfrac{\hat \phi_k}{2} \hat u_k(0) + \hat v_k(0) \Big) t   \Big]  \, .
  \]
\end{itemize}
\end{theorem}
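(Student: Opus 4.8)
The plan is to diagonalize~\eqref{eq:PDE 1d} in the Fourier basis, reduce it to a decoupled family of scalar ODEs, solve each one explicitly, and then identify the resulting series with the unique solution provided by Theorem~\ref{thm:well-posedness}. The starting observation is that, as already noted, assumption~\ref{eq:P3} implies~\ref{eq:P1}--\ref{eq:P2}; hence Theorem~\ref{thm:well-posedness} applies and the solution $u$ to~\eqref{eq:PDE 1d} exists and is unique in the energy space, so it suffices to compute its Fourier coefficients. The structural fact driving the diagonalization is that each $e^{2 \pi i k x / L}$ is simultaneously an eigenfunction of $-\de_{xx}$, with eigenvalue $\big(\tfrac{2\pi k}{L}\big)^2$, and of $P$, with eigenvalue $\hat\phi_k$ by~\ref{eq:P3}.

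First I would establish that the $k$-th Fourier coefficient $\hat u_k(t)$ solves~\eqref{eq:ODE_hat_u_k}. For initial data $U_0 \in \Dom(\A)$, Theorem~\ref{thm:well-posedness} guarantees that $U(t) = (u(t), \de_t u(t))$ is of class $C^1$ in $H$ and continuous into $\Dom(\A)$; consequently each $\hat u_k(t) = \tfrac1L \langle u(t,\cdot), e^{2 \pi i k \cdot / L} \rangle_{L^2(0,L)}$ is of class $C^2$ in time. Pairing the equation in~\eqref{eq:PDE 1d} with $e^{2 \pi i k x / L}$, using that $-\de_{xx}$ contributes the factor $\big(\tfrac{2\pi k}{L}\big)^2$ on this mode and that, by~\ref{eq:P3} together with the commutation of $P$ and $\de_t$ from Remark~\ref{rem:P_commutes_with_det}, the damping term contributes $\hat\phi_k \, \de_t \hat u_k(t)$, I obtain precisely~\eqref{eq:ODE_hat_u_k} with initial conditions $\hat u_k(0)$ and $\de_t \hat u_k(0) = \hat v_k(0)$ inherited from $u_0, v_0$.

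Next I would integrate each scalar ODE. Equation~\eqref{eq:ODE_hat_u_k} is a constant-coefficient linear second-order ODE with characteristic roots $\lambda_k^\pm$ as in~\eqref{eq:lambda_k_pm}, and the three cases in the statement correspond to the sign of the discriminant $\hat\phi_k^2 - \big(\tfrac{4\pi k}{L}\big)^2$. In each case the elementary integration already carried out in the preceding subsections yields the closed forms~\eqref{eq:explicit_solution_1d_delta_positive}, \eqref{eq:explicit_solution_1d_delta_negative}, and~\eqref{eq:explicit_solution_1d_delta_zero}; uniqueness for linear ODEs ensures these are the only admissible coefficients, whence $u(t,x) = \sum_{k \in \Z} \hat u_k(t) e^{2 \pi i k x / L}$.

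The one point requiring genuine care --- and the step I expect to be the main obstacle --- is the passage between the abstract solution in $H$ and the term-by-term series manipulations, namely showing that $\sum_k \hat u_k(t) e^{2 \pi i k x / L}$ and its time derivative converge in $H^1_0(0,L)$ and $L^2(0,L)$ respectively, uniformly on bounded time intervals, and represent exactly the semigroup solution. This is controlled by the a priori energy bound: since $\hat\phi_k \ge 0$, the energy-dissipation balance gives $\sum_{k \in \Z} \big( |\de_t \hat u_k(t)|^2 + \big(\tfrac{2\pi k}{L}\big)^2 |\hat u_k(t)|^2 \big) \le C \, E(u_0,v_0)$ uniformly in $t$, so the series converges in $H$. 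Finally, approximating a general datum $(u_0,v_0) \in H^1_0(0,L) \times L^2(0,L)$ by data in $\Dom(\A)$ and using the continuity of the semigroup together with Parseval's identity transfers the explicit formulas from regular to arbitrary initial data, completing the argument.
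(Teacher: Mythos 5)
Your proposal is correct and follows essentially the same route as the paper: expand in the Fourier basis $e^{2\pi i k x/L}$, use~\ref{eq:P3} to reduce~\eqref{eq:PDE 1d} to the decoupled scalar ODEs~\eqref{eq:ODE_hat_u_k}, and solve each one via the characteristic roots~\eqref{eq:lambda_k_pm} in the three cases of the discriminant. You are in fact somewhat more careful than the paper, which presents the theorem as a summary of the formal mode-by-mode computation without explicitly addressing the convergence of the series or its identification with the semigroup solution, points you handle via the energy bound and density of $\Dom(\A)$.
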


\subsubsection*{Examples} We provide some examples of the explicit solution in the one-dimensional case.
In the next two examples we show that the solution converges exponentially fast to the solution projected on the null space of the frequency filter $P$.

\begin{example} \label{ex:filter_1d}
  Let $(\hat \phi_k)_{k\in}$ be such that 
  \[
  \hat \phi_k = \begin{cases}
    1 \, , & \text{if } |k| \geq k_0 \, , \\ 
    0 \, , & \text{if } |k| < k_0 \, ,
  \end{cases}
  \]
  for some $k_0 \in \N$.
  This means that the operator $P \colon L^2(0,L) \to L^2(0,L)$ is an orthogonal projection on Fourier modes with high frequencies. 
  Indeed, it is idempotent, \ie, $P^2 = P$, and self-adjoint.

  Assume that $1 - 4 \left( \frac{2 \pi k}{L} \right)^2 < 0$ for $|k| \geq k_0$.
  In this case, the solution is given by~\eqref{eq:explicit_solution_1d_delta_negative}.
  In particular, 
  \[
  \hat u_k(t) = \cos \Big( t   \tfrac{2\pi|k|}{L}  \Big) \hat u_k(0) + \frac{1}{\tfrac{2\pi|k|}{L}}   \sin \Big( t \tfrac{2\pi|k|}{L} \Big)   \hat v_k(0)  \, ,
  \]
  for $|k| < k_0$, and
  \[ \label{eq:example_high_frequencies}
  \hat u_k(t) = e^{- \frac{t}{2}} \Big[  \cos \Big( \tfrac{t}{2}\sqrt{\big(\tfrac{4\pi k}{L}\big)^2 - 1 }  \Big) \hat u_k(0) + \frac{2}{\sqrt{\big(\tfrac{4\pi k}{L}\big)^2 - 1}}   \sin \Big( \tfrac{t}{2}\sqrt{\big(\tfrac{4\pi k}{L}\big)^2 - 1}  \Big) \Big(\tfrac{1}{2}  \hat u_k(0) + \hat v_k(0)\Big) \Big]\, ,
  \]
  for $|k| \geq k_0$.

  From the explicit solutions we can deduce the asymptotic behavior of the solution.
  First of all, we observe that the null space of the linear operator $P \colon L^2(0,L) \to L^2(0,L)$ is given by
  \[
  N(P) = \Big\{v = \sum_{|k| < k_0} \hat v_k e^{2\pi i k x/L} \in L^2(0,L) \Big\} \, .
  \]
  We define $Q = \Id - P \colon L^2(0,L) \to L^2(0,L)$, which is the orthogonal projection on the null space of $P$, \ie, the space of Fourier modes with low frequencies.
  We observe that 
  \[
  \hat{Q[u(t,\cdot)]}_k = \begin{cases}
    \cos \Big( t   \tfrac{2\pi|k|}{L}  \Big) \hat u_k(0) + \frac{1}{\tfrac{2\pi|k|}{L}}   \sin \Big( t \tfrac{2\pi|k|}{L} \Big)   \hat v_k(0) \, , & \text{if } |k| < k_0 \, , \\
    0 \, , & \text{if } |k| \geq k_0 \, .
  \end{cases}
  \]
  This implies that $\|u(t,\cdot) - Q[u(t,\cdot)]\|^2_{H^1(0,L)}$ can be estimated, using Parseval's identity, simply in terms of the Fourier coefficients in~\eqref{eq:example_high_frequencies}, giving 
  \[
  \|u(t,\cdot) - Q[u(t,\cdot)]\|_{H^1(0,L)} \leq C e^{-\frac{t}{2}} \Big( \|u_0\|_{H^1(0,L)} + \|v_0\|_{L^2(0,L)} \Big) \, .
  \]
  Moreover, by linearity, $Q[u(t,\cdot)]$ is the solution to the problem~\eqref{eq:PDE 1d} with initial data $Q[u_0]$ and $Q[v_0]$, \ie, the initial data projected on the null space of $P$.
  We have shown that the solution $u(t,x)$ to the problem~\eqref{eq:PDE 1d} converges exponentially fast to the solution to the problem with initial data projected on the null space of $P$. 
  See also Figure~\ref{fig:example_high_frequencies} for a numerical simulation.
  \begin{figure}[H]
    \begin{minipage}{\textwidth}
      \centering
      \includegraphics[width=0.45\textwidth]{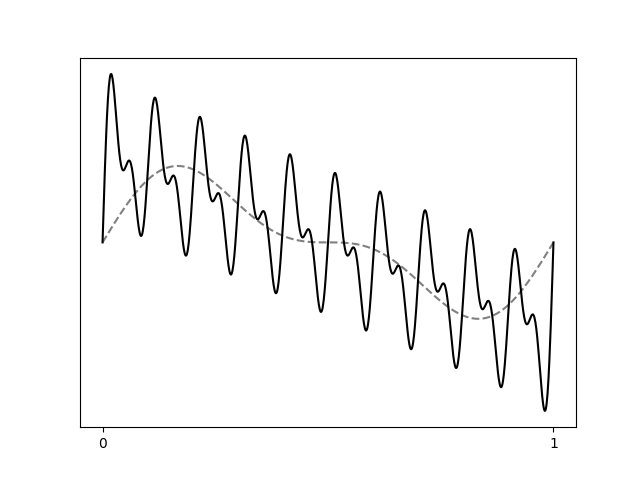} \quad 
      \includegraphics[width=0.45\textwidth]{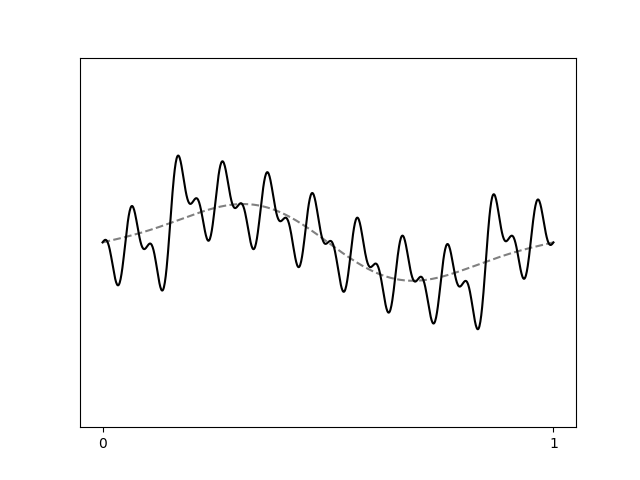} \\ 
      \includegraphics[width=0.45\textwidth]{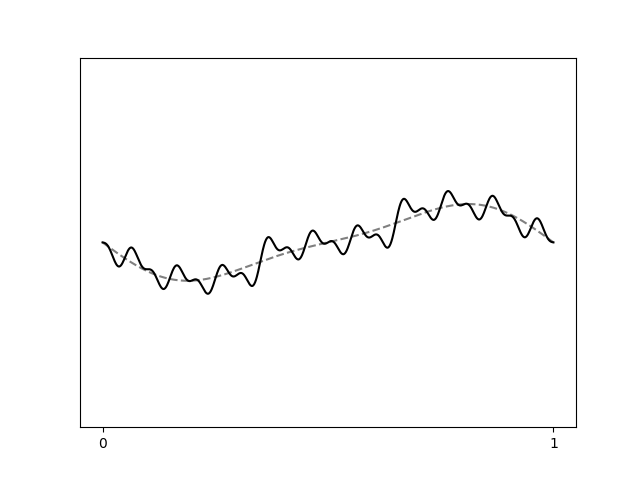} \quad 
      \includegraphics[width=0.45\textwidth]{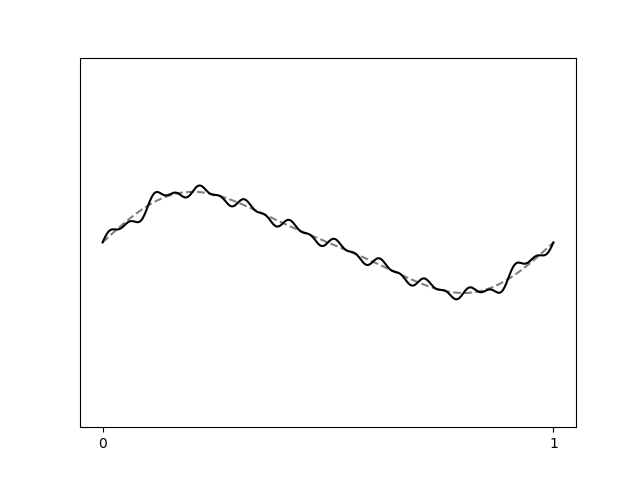}
    \end{minipage}
    \caption{Numerical simulation of the solution to the problem~\eqref{eq:PDE 1d} with $L = 1$, $u_0$ built on modes with frequencies ranging from $0$ to $20$, $v_0 = 0$, and $\phi$ such that $\hat \phi_k = 1$ for $|k| < 3$ and $\hat \phi_k = 0$ for $|k| \geq 3$. The figure shows on top left the initial condition $u_0$ in solid black and its projection $Q[u_0]$ on the null space of $P$ in dashed black. The other frames show the solution $u(t,\cdot)$ at different times. 
     The solution converges exponentially fast to the solution built only on the modes with low frequencies.}
    \label{fig:example_high_frequencies}
  \end{figure}
\end{example}

\begin{example} \label{ex:filter_1d_2}
  Let $L=1$ for simplicity. 
  Assume that 
  \[
  \hat \phi_k = \begin{cases}
    0 \, , & \text{if } |k| \neq 1 \, , \\ 
    4\pi  \, , & \text{if } |k| = 1 \, .
  \end{cases}
  \]
  Consider the initial data 
  \[
    u_0 = 0 \, , \quad v_0 = 2 \cos\big(2\pi x\big) = e^{2\pi i x} + e^{-2\pi i x} \, .
  \]
  Then
  \[
  \hat v_k(0) = \begin{cases}
    0 \, , & \text{if } |k| \neq 1 \, , \\ 
    1 \, , & \text{if } |k| = 1 \, .
  \end{cases}
  \]
  By~\eqref{eq:explicit_solution_1d_delta_zero}, the solution $u(t,x)$ satisfies
  \[
  \hat u_k(t) = \begin{cases}
    0 \, , & \text{if } |k| \neq 1 \, , \\
    t e^{-\frac{t}{2}} \, , & \text{if } |k| = 1 \, .
  \end{cases}
  \]
  We observe that the null space of the operator $P$ is given by
  \[
  N(P) = \Big\{v = \sum_{|k| \neq 1} \hat v_k e^{2\pi i k x} \in L^2(0,L) \Big\} \, .
  \]
  The orthogonal projection $Q = \Id - P$ on the null space of $P$ is simply the projection on the modes with $|k| \neq 1$.
  This implies that $u(t,\cdot) - Q[u(t,\cdot)]$ is given by the modes with $|k| = 1$.
  It follows that
  \[
 \|u(t,\cdot) - Q[u(t,\cdot)]\|^2_{H^1(0,1)} = \big(1+\big(\tfrac{2\pi}{L}\big)^2\big) | \hat u_1(t)|^2 + \big(1+(-2\pi)^2\big) |\hat u_{-1}(t)|^2 = 2 \big(1+(2\pi)^2\big) t^2 e^{-t} \, .
  \]
  Taking the square root, we obtain that the rate of convergence is not $e^{-t/2}$ as in Example~\ref{ex:filter_1d}, but it is still exponential $e^{-\gamma t}$ for $\gamma \in (0,\tfrac{1}{2})$.
\end{example}

The exponential rate of convergence is not the general behavior of the solution. 
It is strongly related to the structure of the frequency filter $P$, as we show in the next example, where the rate of convergence is subexponential.

\begin{example} \label{ex:filter_1d_3}
  In this example we show that the universal exponential decay is not always present.
  We provide an example of $P$ such that the following statement is not true: There exists $\gamma > 0$ and $M > 0$ such that for all initial data $u_0 \in H^1_0(0,1)$ and $v_0 \in L^2(0,1)$, the solution $u(t,\cdot)$ with initial data $u_0$ and $v_0$ satisfies 
  \[ \label{eq:universal_exponential_decay}
  \limsup_{t \to + \infty} \frac{\|u(t,\cdot)\|_{H^1(0,1)}}{e^{-\gamma t}( \|u_0\|_{H^1(0,1)} + \|v_0\|_{L^2(0,1)})} \leq M \, .
  \]
  Let $L=1$ for simplicity. 
  Let us fix $(\hat \phi_k)_{k \in \Z}$ that satisfies the following properties:
  \begin{itemize}
    \item $\hat \phi_k > 0$ for all $k \in \Z$;
    \item $\hat \phi_k^2 - ( 4 \pi k)^2 < 0$ for all $k \in \Z$, $k \neq 0$; 
    \item $\hat \phi_{k} = \hat \phi_{-k}$ for all $k \in \Z$;
    \item $\liminf_{|k| \to +\infty} \hat \phi_k = 0$.
  \end{itemize} 
  The first condition is required just to simplify the example. 
  Indeed, it implies that the null space of the operator $P$ is given by $N(P) = \{0\}$. Hence, in this example we do not need to consider the orthogonal projection $Q$, and we simply have to analyze the convergence of the solutions to the zero function.

  Our claim is the following. 
  
  {\itshape \underline{Claim}}: Let $\gamma > 0$.
  Let $M > 0$.
  There exist initial data $u_0 \in H^1_0(0,1)$ and $v_0 \in L^2(0,1)$ and a sequence of times $t_n \to + \infty$ such that the solution $u(t_n,\cdot)$ with initial data $u_0$ and $v_0$ satisfies\footnote{This is precisely the negation of~\eqref{eq:universal_exponential_decay}.}
  \[
  \|u(t_n,\cdot)\|_{H^1(0,1)} > M e^{-\gamma t_n} \big( \|u_0\|_{H^1(0,1)} + \|v_0\|_{L^2(0,1)} \big)  \quad \text{for all } n \in \N \, .
  \]

  We divide the proof of the claim in several steps.

  {\itshape Step 1: Fixing a suitable frequency $k_0$.} Let us fix $\gamma > 0$ and $M > 0$.  
  We let $k_0 \in \N \sm \{0\}$ (depending on $\gamma$) be such that 
  \[ \label{eq:condition_k_0}
  \hat \phi_{k_0} < \frac{\gamma}{2} \quad \text{and} \quad 2(1+(2\pi k_0)^2) > \frac{\gamma^2}{2} \, .
  \]

  {\itshape Step 2: Constructing the initial datum.}
  In this step we define the initial data $u_0$ and $v_0$.
  We let
  \[
  u_0(x) = 2 \sin(2\pi i k_0 x) \, .
  \]
  Note that $u_0 \in H^1_0(0,1)$ and 
  \[
  \hat u_k(0) = \begin{cases}
    1 \, , & \text{if } k = k_0 \, , \\[1em]
    -1 \, , & \text{if } k = -k_0 \, , \\[1em]
    0 \, , & \text{otherwise} \, .
  \end{cases}
  \]
  Moreover, we let $v_0 = - \frac{1}{2} P [u_0] \in L^2(0,1)$, so that 
  \[
  \hat v_k(0) = - \frac{\hat \phi_k}{2} \hat u_k(0) \implies \frac{\hat \phi_k}{2} \hat u_k(0) + \hat v_k(0) = 0 \, , \quad \text{for all } k \in \Z \, .
  \] 
  Note that 
  \[
  \|u_0\|_{H^1(0,1)}^2 = \sum_{k \in \Z} (1+(2\pi k)^2) |\hat u_k(0)|^2 = 2 (1 + (2\pi k_0)^2) \, , 
  \]
  and, by~\eqref{eq:condition_k_0}, 
  \[ \label{eq:estimating_v_0_with_u_0}
    \|v_0\|_{L^2(0,1)}^2 = \sum_{k \in \Z} |\hat v_k(0)|^2 = |\hat v_{k_0}(0)|^2 + |\hat v_{-k_0}(0)|^2 = \frac{\hat \phi_{k_0}^2}{2} < \frac{\gamma^2}{2} < 2 (1 + (2\pi k_0)^2) =  \|u_0\|_{H^1(0,1)}^2  \, .
  \]

  {\itshape Step 4: Computing the solution.} By~\eqref{eq:explicit_solution_1d_delta_negative}, the solution $u(t,x)$ to the problem~\eqref{eq:PDE 1d} with initial data $u_0$ and $v_0$ has Fourier coefficients given by
  \[
  \begin{split}
    \hat u_k(t) & = e^{- t  \frac{\hat \phi_k}{2}}  \cos \Big( \tfrac{t}{2}\sqrt{(4\pi k)^2 - \hat \phi_k^2 }  \Big) \hat u_k(0) \\
    & = \begin{cases}
      e^{- t  \frac{\hat \phi_k}{2}}  \cos \Big( \tfrac{t}{2}\sqrt{(4\pi k)^2 - \hat \phi_k^2 }  \Big)  \, , & \text{if } k = k_0 \, , \\[1em]
      - e^{- t  \frac{\hat \phi_k}{2}}  \cos \Big( \tfrac{t}{2}\sqrt{(4\pi k)^2 - \hat \phi_k^2 }  \Big)   \, , & \text{if } k = -k_0 \, , \\[1em]
      0 \, , & \text{otherwise} \, .
    \end{cases}
  \end{split}
  \]

  {\itshape Step 4: Estimating the $H^1$ norm of the solution.} 
  By~\eqref{eq:condition_k_0} and by~\eqref{eq:estimating_v_0_with_u_0}, we get
  \[ \label{eq:lower_bound_H1}
  \begin{split}
    \| u(t,\cdot) \|_{H^1}^2 & = \sum_{k \in \Z} \big(1 + (2\pi k)^2\big) |\hat u_k(t)|^2 = e^{-t \hat \phi_{k_0}} \cos^2 \Big( \tfrac{t}{2}\sqrt{(4\pi k_0)^2 - \hat \phi_{k_0}^2 }  \Big) 2 (1 + (2\pi k_0)^2\big) \\
    & \geq e^{-\gamma t/2} \cos^2 \Big( \tfrac{t}{2}\sqrt{(4\pi k_0)^2 - \hat \phi_{k_0}^2 }  \Big) 2 (1 + (2\pi k_0)^2\big) \\
    & \geq e^{\gamma t/2} e^{- \gamma t} \cos^2 \Big( \tfrac{t}{2}\sqrt{(4\pi k_0)^2 - \hat \phi_{k_0}^2 }  \Big) \|u_0\|_{H^1(0,1)}^2 \\
    & \geq e^{\gamma t/2} e^{-\gamma t} \cos^2 \Big( \tfrac{t}{2}\sqrt{(4\pi k_0)^2 - \hat \phi_{k_0}^2 }  \Big) \frac{1}{2} \big( \|u_0\|_{H^1(0,1)}^2 + \|v_0\|_{L^2(0,1)}^2 \big) \, .
  \end{split}
  \]

  {\itshape Step 5: Choice of sequence of times.} First of all, we construct a sequence of times $t_n \to +\infty$ such that
  \[ \label{eq:sequence_of_times}
  \cos^2 \big( \tfrac{t_n}{2}\sqrt{(4\pi k_0)^2 - \hat \phi_{k_0}^2 }  \big) \geq \tfrac{1}{2} \, , \quad \text{ for all } n \in \N \, . 
\]
For, it is enough to choose $t_n = \frac{2\pi n}{\sqrt{(4\pi k_0)^2 - \hat \phi_{k_0}^2 }}$.
By choosing $n \geq n_0$ with $n_0$ large enough, we can additionally ensure that 
\[ \label{eq:exploding exponential}
\frac{1}{4} e^{ \gamma t_n/2} > M \, , \quad \text{for all } n \geq n_0 \, .
\] 
Putting~\eqref{eq:sequence_of_times} and~\eqref{eq:exploding exponential} in~\eqref{eq:lower_bound_H1}, we obtain that 
\[
\begin{split}
  \| u(t_n,\cdot) \|_{H^1}^2 & \geq \frac{1}{2} e^{ \gamma t_n/2} e^{-\gamma t_n} \cos^2 \Big( \tfrac{t_n}{2}\sqrt{(4\pi k_0)^2 - \hat \phi_{k_0}^2 }  \Big)  \big( \|u_0\|_{H^1(0,1)}^2 + \|v_0\|_{L^2(0,1)}^2 \big) \\
   & \geq M e^{-\gamma t_n}\big( \|u_0\|_{H^1(0,1)}^2 + \|v_0\|_{L^2(0,1)}^2 \big) \, .
\end{split}
\]
This concludes the proof of the claim.
\end{example}

\section{Exponential decay for projected solutions}

In this section we show that, under suitable assumptions on $P$, the solution is split into two components: one that decays exponentially fast (the projected solution) and one that solves the undamped wave equation (the orthogonal component).

The precise assumptions on the bounded linear operator $P \colon L^2(\Omega) \to L^2(\Omega)$ are the following:
\begin{enumerate}[label=(A\arabic*)]
  %\item \label{item:thmA1} $P$ maps $H^1_0(\Omega)$ into itself;
  \item \label{item:thmA2} $P$ commutes pointwise\footnote{We use this nomenclature to distinguish the assumption from strong commutation. See, \eg, \cite[VIII.5]{ReeSim80}}  with the Dirichlet Laplacian, \ie, for every $\varphi \in H^2(\Omega) \cap H^1_0(\Omega)$ we have that $P[\varphi] \in H^2(\Omega)  \cap H^1_0(\Omega)$ and $P[\Delta \varphi] = \Delta P[\varphi]$. 
  \item \label{item:thmA3} $P \colon L^2(\Omega) \to L^2(\Omega)$ is an orthogonal projection, \ie, $P^2 = P$ and $P$ is self-adjoint;
\end{enumerate}

\begin{example}
  An operator $P$ of the form of Example~\ref{ex:filter_1d} satisfies~\ref{item:thmA2}--\ref{item:thmA3}.
\end{example}

\begin{remark}
  Assume that $P$ satisfies~\ref{item:thmA2}--\ref{item:thmA3}. 
  Then it also satisfies~\ref{eq:P1}--\ref{eq:P2}, see Example~\ref{ex:projection}.
\end{remark}

\begin{remark}
  Assume that $P$ satisfies~\ref{item:thmA2}--\ref{item:thmA3}.
  Let us show that $P$ preserves $H^1_0(\Omega)$, \ie, $P(H^1_0(\Omega)) \subset H^1_0(\Omega)$.
  Let $u \in H^1_0(\Omega)$ and let us show that $P[u] \in H^1_0(\Omega)$.
  Let us fix an approximating sequence $u_j \in H^2(\Omega) \cap H^1_0(\Omega)$ such that $u_j \to u$ in $H^1_0(\Omega)$.
  By~\ref{item:thmA2}, we have that $P[u_j] \in H^2(\Omega) \cap H^1_0(\Omega)$.
  Moreover, $P[u_j] \to P[u]$ in $L^2(\Omega)$, as $P$ is bounded.
  Let us estimate $\sup_j \|\nabla P[u_j] \|_{L^2(\Omega)}^2$.
  We integrate by parts and we exploit~\ref{item:thmA2}--\ref{item:thmA3} to obtain that 
  \[
  \begin{split}
    \|\nabla P[u_j] \|_{L^2(\Omega)}^2 & = - \langle P[u_j](x) ,  \Delta P[u_j] \rangle_{L^2(\Omega)} = - \langle P[u_j] , P[\Delta u_j] \rangle_{L^2(\Omega)} = - \langle P^2[u_j] , \Delta u_j \rangle_{L^2(\Omega)} \\
    & = - \langle P[u_j] , \Delta u_j \rangle_{L^2(\Omega)} = \langle \nabla P[u_j], \nabla u_j \rangle_{L^2(\Omega)} \leq \|\nabla P[u_j]\|_{L^2(\Omega)} \|\nabla u_j\|_{L^2(\Omega)} \, ,
  \end{split}  
  \]
  from which we deduce that 
  \[
  \sup_j \|\nabla P[u_j] \|_{L^2(\Omega)} \leq \sup_j \|\nabla u_j \|_{L^2(\Omega)} < + \infty \, .
  \]
  It follows that $P[u_j] \rightharpoonup  P[u]$ weakly in $H^1_0(\Omega)$, proving the claim.
\end{remark}

To the aim of the splitting result, we start with a preliminary result. 
The proof is classical, as it relies on the linearity of the equation. 
We provide a proof for the sake of completeness.

\begin{theorem} \label{thm:orthogonal_projection}
  Assume that $P$ satisfies~\ref{item:thmA2}--\ref{item:thmA3}. 
  Let $u_0 \in H^1_0(\Omega)$ and $v_0 \in L^2(\Omega)$.
  Let $(u(t), v(t))$ be the unique solution to~\eqref{eq:PDE} with initial datum $(u_0, v_0)$ provided by Theorem~\ref{thm:well-posedness}.
  Then $(P[u(t)], P[v(t)])$ is the unique solution (in the sense of Theorem~\ref{thm:well-posedness}) to the problem:
  \[ \label{eq:damped_wave_projection}
  \begin{cases}
    \de_{tt} w - \Delta w + \de_t w = 0 \, , & (t,x) \in (0,+ \infty) \times \Omega \, , \\
    w = 0 \, , & (t,x) \in (0,+ \infty) \times \de \Omega \, , \\
    w(0,x) = P[u_0](x) \, , \quad \de_t w(0,x) = P[v_0](x) \, , & x \in \Omega \, .
  \end{cases}
  \]
\end{theorem}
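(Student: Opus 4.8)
The plan is to reduce the statement to the uniqueness part of Theorem~\ref{thm:well-posedness} by showing that the diagonal operator $\mathcal{P}\colon H \to H$, $\mathcal{P}(u,v) = (P[u],P[v])$, intertwines the generator $\A$ of~\eqref{eq:PDE} with the generator $\A_0(u,v) = (-v,-\Delta u + v)$ of the damped wave equation~\eqref{eq:damped_wave_projection}, i.e.\ $\A$ with $P$ replaced by $\Id$. The operator $\A_0$ is maximal monotone and generates a contraction semigroup $\{S_0(t)\}_{t\geq0}$ by Theorem~\ref{thm:well-posedness} applied with $P=\Id$, and $\Dom(\A_0)=\Dom(\A)$ since the domain does not depend on $P$. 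I first record that $\mathcal{P}$ is a bounded linear operator on $H$, using boundedness of $P$ on $L^2(\Omega)$ together with the fact established above that $P$ preserves $H^1_0(\Omega)$ with $\|\nabla P[u]\|_{L^2(\Omega)} \leq \|\nabla u\|_{L^2(\Omega)}$; moreover, by~\ref{item:thmA2} the operator $P$ maps $H^2(\Omega)\cap H^1_0(\Omega)$ into itself, so $\mathcal{P}$ maps $\Dom(\A)$ into $\Dom(\A_0)=\Dom(\A)$. The claim is then equivalent to $\mathcal{P}\,S(t) = S_0(t)\,\mathcal{P}$ for every $t\geq 0$.

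The heart of the argument is the intertwining identity $\A_0\mathcal{P}=\mathcal{P}\A$ on $\Dom(\A)$. For $(u,v)\in\Dom(\A)$ one has
\[
\A_0\mathcal{P}(u,v) = (-P[v],\,-\Delta P[u]+P[v]),
\]
and, invoking the pointwise commutation~\ref{item:thmA2} to replace $\Delta P[u]$ by $P[\Delta u]$ and the idempotence~\ref{item:thmA3} to write $P[v]=P^2[v]$, the right-hand side equals
\[
(-P[v],\,-P[\Delta u]+P^2[v]) = \mathcal{P}(-v,\,-\Delta u+P[v]) = \mathcal{P}\A(u,v).
\]
This is the only point where both structural assumptions on $P$ are used, and it is the main (if short) obstacle.

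With this in hand I would first treat regular data $U_0=(u_0,v_0)\in\Dom(\A)$, for which Theorem~\ref{thm:well-posedness} gives $U(t)=S(t)U_0\in C^1([0,+\infty);H)\cap C([0,+\infty);\Dom(\A))$. Since $\mathcal{P}$ is bounded and linear, $\mathcal{P}U\in C^1([0,+\infty);H)$ with $\frac{\d}{\d t}\mathcal{P}U=\mathcal{P}\frac{\d U}{\d t}$; and $\A_0\mathcal{P}U=\mathcal{P}\A U$ is continuous in $H$ (because $\A U$ is and $\mathcal{P}$ is bounded), whence $\mathcal{P}U\in C([0,+\infty);\Dom(\A_0))$. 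Differentiating and combining~\eqref{eq:ACP} with the intertwining identity yields $\frac{\d}{\d t}\mathcal{P}U+\A_0\mathcal{P}U = \mathcal{P}\big(\frac{\d U}{\d t}+\A U\big)=0$, with initial datum $\mathcal{P}U(0)=(P[u_0],P[v_0])$. By the uniqueness in Theorem~\ref{thm:well-posedness}, $\mathcal{P}U$ is the solution to~\eqref{eq:damped_wave_projection} with this datum, i.e.\ $\mathcal{P}S(t)U_0=S_0(t)\mathcal{P}U_0$.

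Finally, to reach arbitrary $U_0\in H$, I would approximate $U_0$ by $U_0^n\in\Dom(\A)$ with $U_0^n\to U_0$ in $H$ and pass to the limit in $\mathcal{P}S(t)U_0^n=S_0(t)\mathcal{P}U_0^n$: the left-hand side converges to $\mathcal{P}S(t)U_0$ by boundedness of $\mathcal{P}$ and continuity of $S(t)$, while the right-hand side converges to $S_0(t)\mathcal{P}U_0$ by continuity of $\mathcal{P}$ and of $S_0(t)$. This yields $(P[u(t)],P[v(t)])=\mathcal{P}S(t)U_0=S_0(t)(P[u_0],P[v_0])$, which is exactly the unique solution to~\eqref{eq:damped_wave_projection} in the sense of Theorem~\ref{thm:well-posedness}, completing the proof.
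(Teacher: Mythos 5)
Your proof is correct, but it takes a genuinely different route from the paper's. The paper works at the level of distributional solutions: it first shows that the semigroup solution $u$ satisfies a weak formulation of~\eqref{eq:PDE} against test functions, then substitutes $P[\psi]$ for the spatial test function and uses~\ref{item:thmA2}, \ref{item:thmA3} and self-adjointness to recognize $P[u]$ as a distributional solution of~\eqref{eq:damped_wave_projection}, and finally proves uniqueness of distributional solutions by a duality (transposition) argument with the backward-in-time dual problem~\eqref{eq:dual_damped_wave}. You instead stay entirely within the abstract semigroup framework: the identity $\A_0\mathcal{P}=\mathcal{P}\A$ on $\Dom(\A)$, together with the facts that $\mathcal{P}$ is bounded on $H$ and maps $\Dom(\A)$ into $\Dom(\A_0)=\Dom(\A)$, lets you verify directly that $\mathcal{P}U$ is the strong solution of the projected Cauchy problem for data in $\Dom(\A)$, and the general case follows by density and contractivity of both semigroups. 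Your argument is shorter and avoids the most delicate step of the paper's proof (well-posedness of the dual problem and the $H^2$ approximation of $\tilde\varphi$); its own delicate points --- boundedness of $\mathcal{P}$ on $H^1_0(\Omega)\times L^2(\Omega)$ and the inclusion $\mathcal{P}(\Dom(\A))\subset\Dom(\A_0)$ --- are exactly the preparatory remarks the paper establishes before the theorem, and you invoke them correctly. What the paper's longer route buys is the intermediate fact that the semigroup solution is a distributional solution of~\eqref{eq:PDE} (Step~1), which is of independent interest; what your route buys is the cleaner operator identity $\mathcal{P}S(t)=S_0(t)\mathcal{P}$ for all $t\geq 0$ and all data in $H$.
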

Moreover, there exist constants $M > 0$ and $\gamma > 0$ such that
\[ \label{eq:exponential_decay_projection}
\|P[u(t)]\|_{H^1(\Omega)}^2 + \|P[v(t)]\|_{L^2(\Omega)}^2 \leq M e^{-\gamma t} \, , \quad \text{for all } t \geq 0 \, .
\]
\begin{proof}
  We split the proof in several steps. 

  \emph{Step 1:} First of all we show that $t \mapsto u(t,\cdot) \in H^1_0(\Omega)$ is also a distributional solution to~\eqref{eq:PDE}, \ie, we have that  
  \[ \label{eq:distributional_solution}
  \begin{split}
    & \int_0^{+\infty} \int_\Omega u(t,x) \Big( \de_{tt} \varphi(t,x) -  \Delta \varphi(t,x) - \de_t P[\varphi(t,\cdot)](x) \Big) \, \d x \, \d t \\
    & \quad = - \int_\Omega u_0(x)  \de_t \varphi(0,x) \, \d x + \int_\Omega \Big(  u_0(x) P[\varphi(0,\cdot)](x) + v_0(x) \varphi(0,x) \Big) \, \d x  \, ,
  \end{split}
  \]
  for all $\varphi \in C^\infty_c(\R \times \Omega)$. 

  To see this, let us first work in the case $U_0 \in \Dom(\A)$, using the notation of Section~\ref{sec:well-posedness}.
  Then $U(t) = (u(t), v(t)) = S(t)U_0$ belongs to $C^1([0,+\infty);H) \cap C([0,+\infty); \Dom(\A))$ and is the unique solution to~\eqref{eq:ACP}. 
  We fix a curve $t \mapsto \Phi(t) \in H$ given by $\Phi(t) = (0, \varphi(t,\cdot))$ with $\varphi \in C^\infty_c(\R \times \Omega)$.
  We have that 
  \[
  \begin{split}
    - \langle U_0, \Phi(0) \rangle_H & = \int_0^{+\infty} \frac{\d}{\d t} \langle U(t), \Phi(t) \rangle_H \, \d t \\
    & = - \int_0^{+\infty}   \langle \A U(t), \Phi(t) \rangle_H \, \d t  + \int_0^{+\infty} \Big\langle U(t), \frac{\d}{\d t} \Phi(t) \Big\rangle_H   \, \d t \, ,
  \end{split}
  \]
  which reads, using the self-adjointness of $P$,
  \[
  \begin{split}
 - \langle v_0, \varphi(0) \rangle_{L^2(\Omega)} & = \int_0^{+\infty}   \langle \Delta u(t) - P[v(t)], \varphi(t) \rangle_{L^2(\Omega)}   \, \d t - \int_0^{+\infty}    \langle v(t), \de_t \varphi(t) \rangle_{L^2(\Omega)}   \, \d t \\
 & = \int_0^{+\infty}  \Big( \langle  u(t), \Delta \varphi(t) \rangle_{L^2(\Omega)}   -  \langle v(t), P[\varphi(t)] \rangle_{L^2(\Omega)} \Big) \, \d t \\
 & \quad + \int_0^{+\infty}    \langle v(t), \de_t \varphi(t) \rangle_{L^2(\Omega)}   \, \d t  \\
 & = \int_0^{+\infty}  \Big( \langle  u(t), \Delta \varphi(t) \rangle_{L^2(\Omega)}   -  \Big\langle \frac{\d}{\d t}u(t), P[\varphi(t)] \Big\rangle_{L^2(\Omega)} \Big) \, \d t \\
 & \quad + \int_0^{+\infty}    \Big\langle  \frac{\d}{\d t}u(t) , \de_t \varphi(t) \Big\rangle_{L^2(\Omega)}   \, \d t \, .
  \end{split}
  \]
  Then we substitute in the previous equation the following two identities:
  \[
  \begin{split}
    - \langle u_0, \de_t \varphi(0) \rangle_{L^2(\Omega)} & = \int_0^{+\infty }\frac{\d}{\d t} \langle u(t), \de_t \varphi(t) \rangle_{L^2(\Omega)} \, \d t \\
    & = \int_0^{+\infty } \Big\langle \frac{\d}{\d t} u(t), \de_t \varphi(t) \Big\rangle_{L^2(\Omega)} \, \d t + \int_0^{+\infty }  \langle u(t), \de_{tt} \varphi(t) \rangle_{L^2(\Omega)} \, \d t \, ,
  \end{split}
  \]
  and, using Remark~\ref{rem:P_commutes_with_det},
  \[
  \begin{split}
  - \langle u_0, P[\varphi(0)] \rangle_{L^2(\Omega)} & = \int_0^{+\infty } \frac{\d}{\d t} \langle u(t), P[\varphi(t)]\rangle_{L^2(\Omega)} \, \d t \\
  & = \int_0^{+\infty } \Big\langle \frac{\d}{\d t} u(t), P[\varphi(t)]\Big\rangle_{L^2(\Omega)} \, \d t + \int_0^{+\infty } \Big\langle u(t), \frac{\d}{\d t} P[\varphi(t)]\Big\rangle_{L^2(\Omega)} \, \d t \\
  & = \int_0^{+\infty } \Big\langle \frac{\d}{\d t} u(t), P[\varphi(t)] \Big\rangle_{L^2(\Omega)} \, \d t + \int_0^{+\infty } \langle u(t), P[\de_t \varphi(t)]\rangle_{L^2(\Omega)} \, \d t  \, ,
  \end{split}
  \]
  to obtain that
  \[
  \begin{split}
  & \langle u_0, \de_t \varphi(0) \rangle_{L^2(\Omega)}  - \langle u_0, P[\varphi(0)] \rangle_{L^2(\Omega)} - \langle v_0, \varphi(0) \rangle_{L^2(\Omega)} \\
  & \quad = \int_0^{+\infty}  \Big( \langle  u(t), \Delta \varphi(t) \rangle_{L^2(\Omega)} + \langle  u(t), P[\de_t \varphi(t)] \rangle_{L^2(\Omega)} \Big) \, \d t - \int_0^{+\infty}  \langle  u(t) , \de_{tt} \varphi(t) \rangle_{L^2(\Omega)}   \, \d t \, ,
  \end{split}
  \]
  which is precisely~\eqref{eq:distributional_solution}.

  If $U_0 \in H$ (not necessarily in $\Dom(\A)$), then~\eqref{eq:distributional_solution} is obtained by approximating $U_0$ in the $H$-norm with a sequence in $\Dom(\A)$ and then passing to the limit.

  \emph{Step 2}: In the condition~\eqref{eq:distributional_solution} it is enough to test the equation with $\varphi(t,x) = \zeta(t) \psi(x)$, where $\zeta \in C^\infty_c(\R)$ and $\psi \in C^\infty_c(\Omega)$.
  Hence, it reads 
  \[ \label{eq:distributional_solution_2}
  \begin{split}
    & \int_0^{+\infty} \int_\Omega u(t,x) \Big( \de_{tt} \zeta(t)\psi(x) -  \zeta(t) \Delta \psi(x) - \de_t \zeta(t) P[\psi](x) \Big) \, \d x \, \d t \\
    & \quad = - \int_\Omega u_0(x)  \de_t\zeta(0) \psi(x) \, \d x + \int_\Omega \Big( u_0(x) \zeta(0)  P[\psi](x)+ v_0(x) \zeta(0)\psi(x) \Big)  \, \d x  \, ,
  \end{split}
  \]
  for all $\zeta \in C^\infty_c(\R)$ and $\psi \in C^\infty_c(\Omega)$.

  \emph{Step 3}: Note that, by an approximation argument, \eqref{eq:distributional_solution_2} can be tested with $\psi \in H^2(\Omega) \cap H^1_0(\Omega)$.

  \emph{Step 4}: Given $\psi \in H^2(\Omega) \cap H^1_0(\Omega)$, by~\ref{item:thmA2} we have that $P[\psi] \in H^2(\Omega) \cap H^1_0(\Omega)$.
  Hence, we can use $P[\psi]$ in~\eqref{eq:distributional_solution_2} instead of $\psi$ to obtain that 
  \[ \label{eq:distributional_solution_3}
  \begin{split}
    & \int_0^{+\infty} \int_\Omega u(t,x) \Big( \de_{tt} \zeta(t)P[\psi](x) -  \zeta(t) \Delta P[\psi](x) - \de_t \zeta(t) P[P[\psi]](x) \Big) \, \d x \, \d t \\
    & \quad = - \int_\Omega u_0(x)  \de_t\zeta(0) P[\psi](x)  \, \d x + \int_\Omega \Big( u_0(x) \zeta(0) P[P[\psi]](x) + v_0(x) \zeta(0) P[\psi](x) \Big)\, \d x  \, .
  \end{split}
  \]
  Using the properties $P[\Delta \psi] = \Delta P [\psi]$ from~\ref{item:thmA2} and $P^2 = P$ from~\ref{item:thmA3}, we obtain that 
  \[ \label{eq:distributional_solution_4}
  \begin{split}
    & \int_0^{+\infty} \int_\Omega u(t,x) \Big( \de_{tt} \zeta(t)P[\psi](x) -  \zeta(t) P[\Delta \psi](x) - \de_t \zeta(t) P[\psi](x) \Big) \, \d x \, \d t \\
    & \quad = - \int_\Omega u_0(x) \de_t\zeta(0) P[\psi](x)  \, \d x + \int_\Omega \big( u_0(x)  + v_0(x) \big)  \zeta(0) P[\psi](x)\, \d x  \, .
  \end{split}
  \]
  Finally, since $P$ is self-adjoint, we have that
  \[ \label{eq:distributional_solution_5}
  \begin{split}
    & \int_0^{+\infty} \int_\Omega P[u(t,\cdot)](x) \Big( \de_{tt} \zeta(t)\psi(x) -  \zeta(t) \Delta \psi(x) - \de_t \zeta(t) \psi(x) \Big) \, \d x \, \d t \\
    & \quad = - \int_\Omega P[u_0](x) \de_t\zeta(0) \psi(x)  \, \d x + \int_\Omega \Big( P[u_0](x)  + P[v_0](x) \Big)  \zeta(0) \psi(x)\, \d x  \, .
  \end{split}
  \]
  
  \emph{Step 6}: Reasoning as for~\eqref{eq:distributional_solution_2}, we conclude that 
  \[ \label{eq:distributional_solution_6}
  \begin{split}
    & \int_0^{+\infty} \int_\Omega P[u(t,\cdot)](x) \Big( \de_{tt} \varphi(t,x) -  \Delta \varphi(t,x) - \de_t  \varphi(t,x) \Big) \, \d x \, \d t \\
    & \quad = - \int_\Omega P[u_0](x) \de_t\varphi(0,x)  \, \d x + \int_\Omega \Big( P[u_0](x)  + P[v_0](x) \Big)\varphi(0,x)\, \d x  \, .
  \end{split}
  \]
  for all $\varphi \in C^\infty_c(\R \times \Omega)$, which we recognize as the definition of distributional solution to~\eqref{eq:damped_wave_projection}.

  \emph{Step 7}: The distributional solution $w \in L^2((0,+\infty) \x \Omega)$ to~\eqref{eq:damped_wave_projection} is unique, hence \emph{a fortiori} must coincide with the solution provided by Theorem~\ref{thm:well-posedness}.
  This follows from a duality argument.
  To be precise, let us assume that $w \in L^2((0,+\infty) \x \Omega)$ is a distributional solution to~\eqref{eq:damped_wave} with initial data $P[u_0] = 0$ and $P[v_0] = 0$ and let us show that 
 \[  
  \begin{split}
    & \int_0^{+\infty} \int_\Omega w(t,x) \varphi(t,x)\, \d x \, \d t = 0 \, ,
  \end{split}
  \]
  for all $\varphi \in C^\infty_c(\R \times \Omega)$.
  Given $\varphi \in C^\infty_c(\R \times \Omega)$, let $T > 0$ be such that $\varphi(t,x) = 0$ for $t \geq T$.
  We consider a (strong) solution $\tilde \varphi$ to the final-time dual problem:
  \[ \label{eq:dual_damped_wave}
  \begin{cases}
    \de_{tt} \tilde \varphi - \Delta \tilde \varphi - \de_t \tilde \varphi = \varphi \, , & (t,x) \in (0,+ \infty) \times \Omega \, , \\
    \tilde \varphi = 0 \, , & (t,x) \in (0,+ \infty) \times \de \Omega \, , \\
    \tilde \varphi(T,x) = 0 \, , \quad \de_t \tilde \varphi(T,x) = 0 \, , & x \in \Omega \, .
  \end{cases}
  \]
  We approximate $\tilde \varphi$ with a sequence $\tilde \varphi_j \in C^\infty_c(\R \times \Omega)$ such that $\tilde \varphi_j \to \tilde \varphi$ in $H^2(\R \x \Omega)$. 
  Since $w$ is a distributional solution to~\eqref{eq:damped_wave_projection} with zero initial data, we have that
\[  
  \begin{split}
    & \int_0^{+\infty} \int_\Omega w(t,x) \Big( \de_{tt} \tilde \varphi_j(t,x) -  \Delta \tilde \varphi_j(t,x) - \de_t  \tilde \varphi_j(t,x) \Big) \, \d x \, \d t = 0 \, .
  \end{split}
  \]
  Passing to the limit as $j \to + \infty$, we obtain that
  \[  
  \begin{split}
    0 = & \int_0^{+\infty} \int_\Omega w(t,x) \Big( \de_{tt} \tilde \varphi(t,x) -  \Delta \tilde \varphi(t,x) - \de_t  \tilde \varphi(t,x) \Big) \, \d x \, \d t = \int_0^{+\infty} \int_\Omega w(t,x) \varphi(t,x)\, \d x \, \d t  \, ,
  \end{split}
  \]
  concluding the proof of the claim.
  
  \emph{Step 8}: The exponential decay follows from Remark~\ref{rem:exponential_decay_damped_wave}.
\end{proof}

\begin{theorem} \label{thm:exponential_decay}
  Assume that $P$ satisfies~\ref{item:thmA2}--\ref{item:thmA3}.
  Let $Q = \Id - P$.
  Let $u_0 \in H^1_0(\Omega)$ and $v_0 \in L^2(\Omega)$.
  Let $(u(t), v(t))$ be the unique solution to~\eqref{eq:PDE} with initial datum $(u_0, v_0)$ provided by Theorem~\ref{thm:well-posedness}.
  Then 
  \[ \label{eq:decomposition}
    (u(t), v(t)) = (Q[u(t)], Q[v(t)]) + (P[u(t)], P[v(t)])
  \]
  where $(Q[u(t)], Q[v(t)])$ is the unique solution to the undamped wave equation with initial datum $(Q[u_0], Q[v_0])$: 
  \[ \label{eq:undamped_wave}
  \begin{cases}
    \de_{tt} z - \Delta z = 0 \, , & (t,x) \in (0,+ \infty) \times \Omega \, , \\
    z = 0 \, , & (t,x) \in (0,+ \infty) \times \de \Omega \, , \\
    z(0,x) = Q[u_0](x) \, , \quad \de_t z(0,x) = Q[v_0](x) \, , & x \in \Omega \, ,
  \end{cases}
  \]
  and there exist constants $\gamma, M > 0$ such that
  \[
  \|u(t) - Q[u(t)]\|_{H^1_0(\Omega)} + \|v(t) - Q[v(t)]\|_{L^2(\Omega)} \leq M e^{-\gamma t} \, , \quad \text{for all } t \geq 0 \, .
  \]
\end{theorem}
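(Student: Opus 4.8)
The plan is to reduce the statement to the already-established Theorem~\ref{thm:orthogonal_projection}, since the bulk of the work has been done there. The decomposition~\eqref{eq:decomposition} is immediate: from $Q = \Id - P$ one has $Q[u(t)] + P[u(t)] = u(t)$ and $Q[v(t)] + P[v(t)] = v(t)$. The decay estimate is also essentially~\eqref{eq:exponential_decay_projection}, because $u(t) - Q[u(t)] = P[u(t)]$ and $v(t) - Q[v(t)] = P[v(t)]$; I would therefore just take square roots in~\eqref{eq:exponential_decay_projection}, using the equivalence of the $H^1(\Omega)$ and $H^1_0(\Omega)$ norms on $H^1_0(\Omega)$ via Poincar\'e's inequality, and absorb the factor of two and the halving of the exponent into new constants $M, \gamma > 0$. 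The only genuine point left is to prove that $(Q[u(t)], Q[v(t)])$ solves the undamped wave equation~\eqref{eq:undamped_wave} with datum $(Q[u_0], Q[v_0])$.

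To establish this I would mimic the distributional argument from the proof of Theorem~\ref{thm:orthogonal_projection}, but testing with $Q[\psi]$ in place of $P[\psi]$. Starting from the distributional identity~\eqref{eq:distributional_solution_2} satisfied by $u$, I substitute $Q[\psi] = \psi - P[\psi]$ for $\psi$; this is an admissible test function because~\ref{item:thmA2} guarantees $P[\psi] \in H^2(\Omega) \cap H^1_0(\Omega)$, hence so is $Q[\psi]$. The crucial simplification is that the damping term now carries the factor $P[Q[\psi]] = P[\psi] - P^2[\psi] = 0$ by idempotency~\ref{item:thmA3}, so the damping contribution disappears entirely (in contrast to Theorem~\ref{thm:orthogonal_projection}, where it survives as the full damping). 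Using $\Delta Q[\psi] = Q[\Delta\psi]$ (which follows from~\ref{item:thmA2}, since $Q = \Id - P$) and the self-adjointness of $Q$ to move the operator onto $u$, $u_0$ and $v_0$, I arrive at exactly the distributional formulation of~\eqref{eq:undamped_wave} with initial datum $(Q[u_0], Q[v_0])$.

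It then remains to identify this distributional solution with the semigroup solution of Theorem~\ref{thm:well-posedness}, which I would obtain from uniqueness of distributional solutions to~\eqref{eq:undamped_wave} by the same duality argument as in Step~7 of the proof of Theorem~\ref{thm:orthogonal_projection}; here the dual problem is the backward undamped wave equation, which is well-posed. I expect the main obstacle to be not any single step but the careful bookkeeping of the distributional identities together with the justification that $Q[\psi]$ remains an admissible test function in $H^2(\Omega) \cap H^1_0(\Omega)$ — both of which hinge on assumption~\ref{item:thmA2}. Once this is in place, combining the trivial decomposition, the identification of $(Q[u], Q[v])$ as the undamped solution, and the exponential decay of $(P[u], P[v])$ from~\eqref{eq:exponential_decay_projection} completes the argument.
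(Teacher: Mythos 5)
Your proposal is correct and follows essentially the same route as the paper: the decomposition and the decay estimate are read off from Theorem~\ref{thm:orthogonal_projection}, and the identification of $(Q[u],Q[v])$ as the undamped solution is obtained by rerunning the distributional/duality argument with $Q[\psi]$ as test function, the damping term vanishing because $P[Q[\psi]]=P[\psi]-P^2[\psi]=0$. The paper states this step only as ``as in the proof of Theorem~\ref{thm:orthogonal_projection}''; your write-up merely makes the cancellation explicit.
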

\begin{proof}
  Note that $Q$ is the orthogonal projection on the null space of $P$. 
  The decomposition~\eqref{eq:decomposition} follows from the fact that $Q$ is the orthogonal projection on the orthogonal complement of the range of $P$.
  Moreover, $Q$ satisfies~\ref{item:thmA2}--\ref{item:thmA3} as well.

  As in the proof of Proposition~\ref{thm:orthogonal_projection}, one proves that $(Q[u(t)], Q[v(t)])$ is a distributional solution to the undamped wave equation~\eqref{eq:undamped_wave}. 
  Finally, the exponential decay follows from~\eqref{eq:exponential_decay_projection}.
\end{proof}

\begin{acknowledgements}
   The authors are members of Gruppo Nazionale per l'Analisi Matematica, la Probabilit\`a e le loro Applicazioni (GNAMPA) of the Istituto Nazionale di Alta Matematica (INdAM).
 
   They have been partially supported by the Research Project of National Relevance ``Evolution problems involving interacting scales'' granted by the Italian Ministry of Education, University and Research (MUR Prin 2022, project code 2022M9BKBC, Grant No. CUP D53D23005880006).
 
   They acknowledge financial support under the National Recovery and Resilience Plan (NRRP) funded by the European Union - NextGenerationEU -  
Project Title ``Mathematical Modeling of Biodiversity in the Mediterranean sea: from bacteria to predators, from meadows to currents'' - project code P202254HT8 - CUP B53D23027760001 -  
Grant Assignment Decree No. 1379 adopted on 01/09/2023 by the Italian Ministry of University and Research (MUR).

They were partially supported by the Italian Ministry of University and Research under the Programme ``Department of Excellence'' Legge 232/2016 (Grant No. CUP - D93C23000100001).

\end{acknowledgements}
 
\printbibliography

\end{document}